\numberwithin{equation}{section}
\newtheorem{theorem}{Theorem}[section]
\newtheorem{lemma}[theorem]{Lemma}
\newtheorem{proposition}[theorem]{Proposition}
\newtheorem{rem}[theorem]{Remark}
\DeclareMathOperator{\p}{\mathbb{P}}
\newcommand{\ind}{\mathbf{1}}
\newcommand{\R}{\mathbb{R}}
\newcommand{\N}{\mathbb{N}}
\newcommand{\cB}{{\ensuremath{\mathcal B}} }
\newcommand{\cP}{{\ensuremath{\mathcal P}} }
\newcommand{\cC}{{\ensuremath{\mathcal C}} }
\newcommand{\bP}{{\ensuremath{\mathbf P}} }
\newcommand{\bE}{{\ensuremath{\mathbf E}} }
\DeclareMathSymbol{\leqslant}{\mathalpha}{AMSa}{"36} 
\DeclareMathSymbol{\geqslant}{\mathalpha}{AMSa}{"3E} 
\DeclareMathSymbol{\eset}{\mathalpha}{AMSb}{"3F}     
\newcommand{\dd}{\,\text{\rm d}}             
\newcommand{\bbE}{{\ensuremath{\mathbb E}} }
\newcommand{\bbN}{{\ensuremath{\mathbb N}} }
\newcommand{\bbP}{{\ensuremath{\mathbb P}} }
\newcommand{\bbR}{{\ensuremath{\mathbb R}} }
\newcommand{\gd}{\delta}
\newcommand{\gp}{\varphi}
\newcommand{\gs}{\sigma}
\newcommand{\norm}[1]{\left\lVert#1\right\rVert}
\def\captionfont@{\footnotesize}
\def\captionheadfont@{\scshape}
\long\def\@makecaption#1#2{%
  \vspace{2mm}
  \setbox\@tempboxa\vbox{\color@setgroup
    \advance\hsize-6pc\noindent
    \captionfont@\captionheadfont@#1\@xp\@ifnotempty\@xp
        {\@cdr#2\@nil}{.\captionfont@\upshape\enspace#2}%
    \unskip\kern-6pc\par
    \global\setbox\@ne\lastbox\color@endgroup}%
  \ifhbox\@ne 
    \setbox\@ne\hbox{\unhbox\@ne\unskip\unskip\unpenalty\unkern}%
  \fi
  \ifdim\wd\@tempboxa=\z@ 
    \setbox\@ne\hbox to\columnwidth{\hss\kern-6pc\box\@ne\hss}%
  \else 
    \setbox\@ne\vbox{\unvbox\@tempboxa\parskip\z@skip
        \noindent\unhbox\@ne\advance\hsize-6pc\par}%
\fi
  \ifnum\@tempcnta<64 
    \addvspace\abovecaptionskip
    \moveright 3pc\box\@ne
  \else 
    \moveright 3pc\box\@ne
    \nobreak
    \vskip\belowcaptionskip
  \fi
\relax
}
\def\writefig#1 #2 #3 {\rlap{\kern #1 truecm
\raise #2 truecm \hbox{#3}}}
\title[Scaling limits of interacting diffusions on Erd\H os-R\'enyi graphs]{A Law of Large Numbers and  Large Deviations   \\ for
interacting diffusions on Erd\H os-R\'enyi graphs}
\author{Fabio Coppini}
\address{
  Universit\'e Paris Diderot, Sorbonne Paris Cit\'e,   Laboratoire de Probabilit{\'e}s Statistique et Mod\'elisation, UMR 8001,
            F- 75205 Paris, France
}
\author{Helge Dietert}
\address{CNRS, Universit\'e Paris Diderot and Sorbonne Universit\'e,   Institut de Math\'ematiques de Jussieu - Paris Rive Gauche,
            F- 75205 Paris, France
}
\author{Giambattista Giacomin}
\address{
  Universit\'e Paris Diderot, Sorbonne Paris Cit\'e,   Laboratoire de Probabilit{\'e}s Statistique et Mod\'elisation, UMR 8001,
            F- 75205 Paris, France
}
\begin{document}

\begin{abstract}
  We consider a class of particle systems described by differential
  equations (both stochastic and deterministic), in which the
  interaction network is determined by the realization of an Erd\H
  os-R\'enyi graph with parameter $p_n\in (0, 1]$, where $n$ is the
  size of the graph (i.e., the number of particles). If $p_n\equiv 1$
  the graph is the complete graph (mean field model) and it is well
  known that, under suitable hypotheses, the empirical measure
  converges as $n\to \infty$ to the solution of a PDE: a McKean-Vlasov
  (or Fokker-Planck) equation in the stochastic case, or a
  Vlasov equation in the deterministic one. It has already been shown
  that this holds for rather general interaction networks, that
  include Erd\H os-R\'enyi graphs with $\lim_n p_n n =\infty$, and
  properly rescaling the interaction to account for the dilution
  introduced by $p_n$. However, these results have been proven under
  strong assumptions on the initial datum which has to be
  \emph{chaotic}, i.e. a sequence of independent identically
  distributed random variables. The aim of our contribution is to
  present results -- Law of Large Numbers and Large Deviation
  Principle -- assuming only the convergence of the empirical measure
  of the initial condition.
  \\
  \\
  2010 Mathematics Subject Classification: 60K35, 82C20
\end{abstract}

\maketitle

\section{Introduction}

\subsection{Basic notations, the models and a first look at the main question}
Large systems of interacting diffusions with mean field
type interactions have been an important research topic in the
mathematical community at least since the 60's. The program of
identifying the emerging behavior for $n\to \infty$,
where $n$ is the number of interacting \emph{units}, has been
fully developed under suitable regularity and boundedness assumptions
on the coefficients defining the system. In particular, Law of Large
Numbers, Central Limit Theorems and Large Deviation Principles have
been established (see for example
\cite{cf:szni,cf:mele,cf:tanaka,cf:DG,cf:BDF}). A number of important
issues remain unsolved, like the generalization to singular
interactions (e.g. \cite{cf:jabin}) or understanding the delicate
issue of considering at the same time large $n$ and large time
(e.g. \cite{cf:LP}). But another direction in which mathematical
results are still very limited is about relaxing the \emph{complete
  graph assumption} for the interaction network -- \emph{complete
  graph} is just a different wording for \emph{mean field} -- and
going towards more heterogeneous interaction networks.  This is an
issue that emerges in plenty of applied disciplines and giving a
proper account of the available literature would be a daunting task:
so we limit ourselves to signaling the recent survey \cite{cf:survey}
which contains an extended literature.  


We are therefore going to study the emerging behavior of
  interacting diffusion models when, like in complete graphs, every
  unit interacts with a diverging number of other units. The
  interaction network is described as a random graph, notably of Erd\H
  os-R\'enyi (ER) type; so we start with the basic notions on 
  graphs.

Let $\xi^{(n)}=\{ \xi_{i,j}^{(n)}\} _{i,j\in\{1,\dots,n\}}$ denote the adjacency matrix of a graph $\left(V^{(n)},E^{(n)}\right)$ with $n$ vertices ($\xi^{(n)}$ will also denote the graph itself):
\begin{equation}
V^{(n)}\, :=\, \left\{ 1,\dots,n\right\} \ \text{ and } \
E^{(n)}\,:=\, \left\{ (i,j)\in V^{(n)}\times V^{(n)}:\xi_{i,j}^{(n)}=1\right\} \,.
\end{equation}
We consider sequences of asymmetric ER random
  graphs with self loops with probabilities \(p_n \in (0,1)\) for
  \(n=2,3,\dots\). More precisely, we just assume that
  $\{ \xi_{i,j}^{(n)}\} _{i,j\in\{1,\dots,n\}}$ are Independent
  Identically Distributed (IID) Bernoulli random variables of
  parameter $p_n$ (with notation B($p_n$)). The arguments are easily
  adapted to the case in which $\xi_{j,j}^{(n)}=0$ for every $j$ and
  the results are unchanged. 

\smallskip

Even if these graphs are not coupled for different values of $n$, it is practical to work with
only one probability space and to couple  these  adjacency matrices (or random graphs). For example one can start from a sequence $\{U_k\}_{k \in \bbN}$ of IID $U(0,1)$ variables and define $\xi^{(n)}_{i,j}= \ind_{U_{k(i,j)}<p_n}$, with $k$ an arbitrary bijection from $\bbN^2$ to $ \bbN$. The law of the graph is denoted by $\bbP$, with $\bbE$ the corresponding expectation,  and we will just write $\bbP(\dd \xi)$-a.s. meaning ``almost surely in the realization of $\{\xi^{(n)}\}_{n=2,3, \ldots}$".

\medskip

Given a realization of $\xi^{(n)}$, consider the $n$-dimensional diffusion $\theta_{t}^{n}:=\{\theta_{t}^{i,n}\}_{i=1, \ldots, n}$ which solves for every $i$
\begin{equation}
\label{eq:dRG}
\dd\theta_{t}^{i,n}\, =\, F\left(\theta_{t}^{i,n}\right)\dd t+\frac{1}{n}\sum_{j=1}^{n}\frac{\xi_{i,j}^{(n)}}{p_{n}}\Gamma\left(\theta_{t}^{i,n},\theta_{t}^{j,n}\right)\dd t+\sigma \left(\theta_{t}^{i,n}\right) \dd B_{t}^{i},
\end{equation}
where  $\{B_{\cdot}^{i}\}_{i\in \bbN}$ are independent standard Brownian motions (whose law is denoted by $\bP$) and  independent also of $\xi^{(n)}$ (so, we are effectively working with $\bP \otimes \bbP$). For simplicity, we consider only deterministic initial conditions; but the results apply to random initial conditions once they are taken independent of Brownian motions and of $\xi$. Moreover, assume that:
\smallskip

\begin{enumerate}
\item
$F$, $\Gamma$ and $\gs$ are real valued (uniformly) Lipschitz functions: the corresponding Lipschitz constants are denoted by $L_F$, $L_\Gamma$ and
$L_\gs$;
\item $\Gamma$ is bounded, in particular  $\norm{\Gamma}_\infty:=\sup_{x,y\in\R}\left|\Gamma(x,y)\right| < \infty$;
\item  $\gs_- \le \gs (\cdot) \le \gs_+$ with $\gs_\pm$ two positive  constants (non degenerate diffusion). If   $\gs (\cdot)$ is a constant, we include the case $\gs (\cdot)\equiv 0$.
\end{enumerate}

\smallskip

Fix $T>0$, the law of the $n$ trajectories $\{\theta_{t}^{n}\}_{t \in [0,T]}$ for the \textit{quenched} system is denoted by $\bP^\xi_n$, i.e. $\bP^\xi_n \in \cP \left(\cC^0 ([0,T];\R^n)\right)$, and the associated empirical measure at time $t$ by $\left\{\mu_t^n\right\}_{t\in[0,T]}$, i.e.
\begin{equation}
\label{eq:empMesT}
\mu_t^n \, :\, = \frac 1n \sum_{j=1}^n \delta_{\theta_{t}^{j,n}} \in \cP \left( \bbR \right)\, .
\end{equation}
$\cP \left( \bbR \right)$ denotes the set of probability measures over $(\bbR, \cB(\bbR))$ equipped with the (metrizable) topology of weak convergence: i.e., if $\mu_n\in\cP \left( \bbR \right)$ for every $n$, then $\lim_n \mu_n = \mu \in \cP \left( \bbR \right)$ if $\int h(x) \mu_n(\dd x) \to  \int h(x) \mu(\dd x)$  as $n\uparrow \infty$ for every $h(\cdot)$ continuous and bounded function.
Note that since $\xi$ is random, $\mu_t^n$ is a random variable taking values in $\cP \left( \bbR \right)$, equipped with the $\gs$-algebra of its Borel subsets.

The solution  $\{\theta_{t}^{i,n}\}_{i=1, \ldots, n}$ is going to be tightly linked with $\{\bar \theta_{t}^{i,n}\}_{i=1, \ldots, n}$ which solves
\begin{equation}
\label{eq:dnonRG}
\dd{\bar{\theta}}_{t}^{i,n}\, =\, F\left(\bar \theta_{t}^{i,n}\right)\dd t+\frac{1}{n}\sum_{j=1}^{n}\Gamma\left(\bar\theta_{t}^{i,n},\bar\theta_{t}^{j,n}\right)\dd t+\sigma \left(\bar\theta_{t}^{i,n}\right) \dd B_{t}^{i}.
\end{equation}
The law of $\{\bar \theta_{t}^{n}\}_{t \in [0,T]}$ is denoted by $\bP_n$. Moreover
$\bar \mu_t^n \, :\, = (1/n) \sum_{j=1}^n \delta_{\bar\theta_{t}^{j,n}}$.
Often  (\ref{eq:dnonRG}) is called  \textit{annealed} system: of course  (\ref{eq:dnonRG}) is obtained from
 (\ref{eq:dRG}) by taking the expectation of the drift with respect to $\bbP$.

\medskip

If the empirical measure of the initial conditions converges to a  probability $\mu_0$, i.e.
\begin{equation}
\label{eq:hypmu0}
\lim_{n\rightarrow \infty} \bar \mu_0^n \, =\,  \mu_0 \in \cP \left( \bbR  \right)
\,,
\end{equation}
and if  $\int_\bbR x^2 \mu_0(\dd x) < \infty$,
then it is well known that, for every $t >0$, $\bar \mu_t^n$ weakly converges in $\cP \left( \bbR  \right)$ to $\mu_t$, the unique weak solution of the following McKean-Vlasov (or Fokker-Planck) equation
\begin{equation}
\label{eq:McKV}
\partial_t \mu_t (\theta) \, =\,  \frac  12 \partial^2_{\theta }\left({\sigma^2(\theta)} \mu_t (\theta)\right)-\partial_{\theta} \left( \mu_t (\theta) F(\theta) \right) - \partial_\theta \left( \mu_t (\theta) \int_{\bbR} \Gamma(\theta, \theta') \mu_t (\dd \theta') \right)\, .
\end{equation}
The slightly stronger result that is proven is in fact: for every $T>0$, if one considers $\mu^n_\cdot$ as an element of $C^0([0,T]; \cP(\bbR))$ (a complete separable metric space), then $\lim_n \mu^n_\cdot = \mu_\cdot$ ($\bP$-a.s. when $\sigma$ is non degenerate). The notion of weak solution $\mu_\cdot \in C^0([0,T]; \cP(\bbR))$ to \eqref{eq:McKV}, which can be found for example in \cite{cf:G88}, is strictly related to the nonlinear diffusion formulation: the stochastic process $\{\gp _t\}_{t\in [0, T]}$ that solves
\begin{equation}
\label{eq:theta_nonlin}
\begin{cases}
\dd\gp_{t}\,=\,  F\left(\gp _{ t}\, \right)\dd t +  \int \Gamma\left( \gp_{ t},  \gp  \right) \nu_t \left(\dd \gp \right) \dd t + \sigma \left(\gp_t \right)\dd B_{t}\, , \\
\nu_t \,=\,  {\mathrm{Law}} (\gp_t)\,, \quad\text{for all }t\in[0,T]\, ,
\end{cases}
\end{equation}
with initial condition which is a square integrable random variable independent of the standard Brownian motion $B_\cdot$. Existence and uniqueness for this \emph{atypical} stochastic differential equation is not obvious at all, but it is by now well known that if $\nu_0 = \mu_0$, then the unique $\nu_\cdot \in C^0([0,T]; \cP(\bbR))$ such that $\nu_t$ is the law of $\gp _t$ for all $t\in [0,T]$, is the unique weak solution of \eqref{eq:McKV}, i.e. $\nu_t =\mu_t$ for all $t\in [0,T]$.  The literature on the results that we have just mentioned is vast, see e.g.   \cite{cf:oel,cf:szni,cf:mele,cf:G88} for the non degenerate diffusion case and \cite{cf:dobrushin,cf:neunzert} for the $\gs(\cdot)\equiv 0$ case; in this last case there is no need to assume that $\int_\bbR x^2 \mu_0(\dd x) < \infty$.

In the sequel, we will also work with probabilities in $\cP \left(\cC^0 ([0,T]; \R)\right)$, that is considering the law of $\{\gp_{t}\}_{t\in[0,T]}$ seen as a random trajectory on the path space $\cC^0 ([0,T]; \R)$, rather than its time marginals $\mu_t \in \cP(\R)$.

\medskip

\begin{rem}
\label{rem:proj}
	Observe that knowing the law of \eqref{eq:theta_nonlin} gives more information than the solution $\mu_\cdot$ of the McKean-Vlasov equation \eqref{eq:McKV}. Indeed, call $P_\gp$ the law  of $\{\gp _t\}_{t\in [0, T]}$, then $P_\gp$ is an element of $\cP \left(\cC([0,T]; \R) \right)$, whereas $\mu_\cdot \in C^0([0,T]; \cP(\bbR))$. It is straighforward to obtain $\mu_t$ from $P_\gp$ by just observing
	\begin{equation}
	\label{eq:mapPI}
	\mu_t (\cdot) = P_\gp \circ \pi_t^{-1} (\cdot),
	\end{equation}
	where $\pi_t\,:\,\cC([0,T]; \R) \rightarrow \R$ is the canonical projection at time $t$. Observe that a reverse statement is not always possible: $\mu_\cdot$ alone does not allow to compute multidimensional time marginals like $\bP (\gp_s \in A, \gp_t \in B)$, for $s,t \in [0,T]$ and $A,B \subset \R$. Existence, uniqueness and well-posedness of the problem for $P_\gp$ can be found in \cite{cf:mele} and references therein.
\end{rem}


\subsection{Aim of the paper}
Informally stated, our aim is to study the proximity of $\mu^n_\cdot$ and $\bar \mu^n_\cdot$, for $n$ large. Since $\bar \mu^n_\cdot$ approaches the solution of the McKean-Vlasov equation \eqref{eq:McKV}, this turns out to be studying  the proximity of $\mu^n_\cdot$ and the solution of the McKean-Vlasov equation. This of course requires (at least) the assumption that
\begin{equation}
\label{eq:hypmu0.0}
\lim_{n\to \infty} \mu^n_0\, = \,  \mu_0\, .
\end{equation}

\medskip

A result of this type has been already achieved: in the case  $\gs (\cdot) \equiv \gs \ge 0$,  \cite{cf:DGL} proved a LLN for the trajectories of \eqref{eq:dRG} where $\xi^{(n)}$  is a (deterministic) sequence of graphs such that
\begin{equation}
	\label{hyp:DGL}
	\lim_{n \to \infty} \sup_{i\in\{1,\dots,n\}} \left|\frac 1n \sum_{j=1}^n \frac{\xi^{(n)}_{i,j}}{p_n} -1 \right|\,=\,0\,,
\end{equation}
and with IID initial conditions (\emph{chaotic} initial datum), that is $\theta^{j,n}_0=\theta^{j}_0$ for every $n$ and every $j=1, \ldots, n$ where
\begin{equation}
\label{hyp:DGL0}
\left\{ \theta^{j}_0\right\}_{j\in \bbN} \, \text{ is a typical  realization of an IID sequence of variables with law } \mu_0\, .
\end{equation}
Under conditions \eqref{hyp:DGL} and \eqref{hyp:DGL0}, it is proved that $\lim_n \mu^n_\cdot = \mu_\cdot$ in $\bP$-probability. We recall that, as stated right after  \eqref{eq:dRG},
$\{ \theta^{j}_0\}_{j\in \bbN}$ is independent of the driving Brownians and of the graph $\xi$.

This seems at first rather satisfactory. However in \cite{cf:DGL} it is discussed at length how  this result in reality is, on one hand,  surprising and, on the other, that it does not really solve the problem. This can be understood by considering that
the \emph{homogeneous degree condition} \eqref{hyp:DGL} is $\bbP \left(\dd \xi \right)$-a.s. verified for  ER type graphs when $\liminf_n  n p_n/ \log n$ is larger than a well-chosen constant (see \cite[Proposition 1.3]{cf:DGL}). But the class of graphs satisfying \eqref{hyp:DGL}  goes well beyond ER graphs: in particular, it is straightforward to construct graphs with an arbitrary number of connected components that satisfy \eqref{hyp:DGL}, see the following remark.


\smallskip
\begin{rem}
	\label{rem:eo}
	\eqref{eq:hypmu0.0} and \eqref{hyp:DGL} are not sufficient to obtain a result in the direction
	we are aiming at. In fact, if $\xi^{(n)}$ is the graph in which two vertices are connected if and only if they have the same parity (which corresponds to $\lim_n p_n=1/2$), then, as long as $\mu_0$ is not the uniform measure, one can easily arrange the initial condition in order to have different limit distributions on even and odd sites, or no limit at all. Thus, as $n \to \infty$, the evolution will not be described by \eqref{eq:McKV}.
\end{rem}

In a nutshell, the results in \cite{cf:DGL} are obtained under a weak assumption on the graph, but under strong assumptions on the initial condition. And this to the point of obtaining a result that is troublesome: a system with plenty of disconnected components behaves essentially like a totally connected one! Of course the \emph{solution} of this apparent paradox is in the chaotic character of the initial condition that leads to a homogeneous and identical behavior of the initial datum on all components, and the fact that chaos propagates at least on a finite time horizon (see \cite{cf:DGL} for more on this issue).
But there is no reason to expect mean field type behavior, assuming only \eqref{hyp:DGL} on the graph, without a  strong \emph{statistical homogeneity}  assumption on the initial datum, as argued in Remark~\ref{rem:eo}.

The aim of this paper is to attack the problem assuming only the convergence of the empirical measure of the initial datum, that is \eqref{eq:hypmu0.0}, but assuming that the graph is of ER type. Otherwise said, we want to make a minimal assumption on the initial condition and we try to exploity the \emph{chaoticity} of the graph to achieve the result. We will attack the problem from more then one perspective, not only the direct LLN angle of attack, but also from the Large Deviations (LD) perspective. The vast literature related to our results
is presented and discussed after the statements.

\section{Main results}

Let us denote $d_{\text{bL}}(\cdot, \cdot)$ the bounded Lipschitz distance which endows the weak convergence topology on $\cP(\bbR)$ (this choice is somewhat arbitrary: other distances can be used, for example the Wasserstein one, see \cite{cf:dobrushin}). By this we mean that $d_{\text{bL}}(\mu, \nu)= \sup_{h}  \left|\int h \dd \mu - \int h \dd \nu\right|$, where the supremum is taken over $h: \bbR \to [0,1]$ such that $\vert h(x)-h(y)\vert \le \vert x-y \vert$.

\medskip
We are now ready to state the LLN. Recall that $\mu^n_\cdot$ is a random element of $C^0([0,T]; \cP(\bbR))$
and that $\mu_\cdot$, a non random element of $C^0([0,T]; \cP(\R))$,   is  the unique weak solution of the McKean-Vlasov equation \eqref{eq:McKV}.
\medskip

\begin{theorem}
\label{th:LLN0}
Assume that the initial datum is deterministic, that it satisfies  \eqref{eq:hypmu0.0} and, if $\gs (\cdot) \not \equiv 0$, that it satisfies also    that $\int_\bbR x^2 \mu_0(\dd x) < \infty$. Make the hypothesis that $p_n$ satisfies
\begin{equation}
\label{eq:p_n-1}
\liminf_{n \to \infty}  \frac{p_n n}{\log n} \,  >0\, ,
\end{equation}
and either that $0<\gs_- \le \gs (\cdot) \le \gs_+< \infty$ or  $\gs (\cdot)\equiv 0$. Then  $\bP\otimes\bbP$-a.s.  we have that
\begin{equation}
\label{eq:LLN-ER}
\lim_{n \to \infty}\mu^n_\cdot = \mu_\cdot \quad \text{ in }C^0([0,T]; \cP(\R)).
\end{equation}
\end{theorem}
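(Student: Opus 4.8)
The plan is to reduce everything to a coupling between the quenched system \eqref{eq:dRG} and the annealed system \eqref{eq:dnonRG}. Since it is already recalled after \eqref{eq:McKV} that $\bar\mu^n_\cdot\to\mu_\cdot$ in $C^0([0,T];\cP(\R))$, $\bP$-a.s., it suffices by the triangle inequality for $d_{\text{bL}}$ to prove that $\sup_{t\le T}d_{\text{bL}}(\mu^n_t,\bar\mu^n_t)\to 0$, $\bP\otimes\bbP$-a.s. I would run both systems with the \emph{same} Brownian motions $\{B^i\}$ and the \emph{same} deterministic initial data $\theta^{i,n}_0=\bar\theta^{i,n}_0$, and set $D^i_t:=\theta^{i,n}_t-\bar\theta^{i,n}_t$, so that $D^i_0=0$. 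Because any $1$-Lipschitz test function gives $d_{\text{bL}}(\mu^n_t,\bar\mu^n_t)\le\frac1n\sum_i|D^i_t|$, it is then enough to control $\frac1n\sum_i|D^i_t|^2$ uniformly in $t$.

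The heart of the matter is the decomposition of the drift difference driving $D^i$ into
\[
\frac1n\sum_j\frac{\xi^{(n)}_{i,j}}{p_n}\big[\Gamma(\theta^i,\theta^j)-\Gamma(\bar\theta^i,\bar\theta^j)\big]\;+\;\frac1n\sum_j\Big(\frac{\xi^{(n)}_{i,j}}{p_n}-1\Big)\Gamma(\bar\theta^i,\bar\theta^j)\;=:\;A^i_t+B^i_t.
\]
The Lipschitz term $A^i$ is bounded, by assumption (1), by $L_\Gamma\big(\frac1n\sum_j\tfrac{\xi_{i,j}}{p_n}\big)|D^i|+\frac{L_\Gamma}{n}\sum_j\tfrac{\xi_{i,j}}{p_n}|D^j|$; provided the normalised in-degrees $\frac1n\sum_j\xi^{(n)}_{i,j}/p_n$ are uniformly bounded by a constant $K$, this produces, after applying It\^o to $|D^i_t|^2$, summing over $i$, using $L_F$- and $L_\gs$-Lipschitzianity together with a Burkholder--Davis--Gundy estimate for the martingale generated by $\gs(\theta^i)-\gs(\bar\theta^i)$, a Gr\"onwall inequality of the form
\[
\bbE_{B}\Big[\sup_{s\le t}\tfrac1n\sum_i|D^i_s|^2\Big]\;\le\;C\int_0^t\bbE_B\Big[\tfrac1n\sum_i|D^i_s|^2\Big]\dd s\;+\;C\,\bbE_B\!\int_0^T\tfrac1n\sum_i|B^i_s|^2\dd s,
\]
where $\bbE_B$ is expectation in the Brownian motions only and $C=C(T,L_F,L_\Gamma,L_\gs,K)$. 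Gr\"onwall then bounds the coupling error by $e^{CT}\tilde Q_n$, where $\tilde Q_n:=\bbE_B\int_0^T\frac1n\sum_i|B^i_s|^2\dd s$ is a \emph{graph-noise functional}.

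Everything thus reduces to two facts about the Erd\H os--R\'enyi randomness, and here lies the main difficulty, because only $\liminf_n np_n/\log n>0$ is assumed, with a possibly small constant $c_0$. First, for the degree bound: since the rows $\{\xi^{(n)}_{i,j}\}_j$ are independent, a Chernoff estimate gives $\bbP(\sum_j\xi_{i,j}\ge Knp_n)\le\exp(-np_n\,h(K))$ with $h(K)\sim K\log K$; choosing $K$ large \emph{depending on $c_0$} makes $n\exp(-np_n h(K))\le n^{1-c_0 h(K)}$ summable, so by Borel--Cantelli $\max_i\frac1n\sum_j\xi_{i,j}/p_n\le K$ eventually, $\bbP$-a.s. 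Second, and crucially, the annealed trajectories $\bar\theta^{i,n}$ are \emph{independent of the graph} $\xi^{(n)}$: conditionally on the Brownian motions the weights $\Gamma(\bar\theta^i_s,\bar\theta^j_s)$ are frozen and bounded by $\norm{\Gamma}_\infty$, so each $B^i_s$ is, in $\xi$, a centred average of independent terms with $\mathrm{Var}(\xi_{i,j}/p_n)\le 1/p_n$. This yields $\bbE_\xi\tilde Q_n\le T\norm{\Gamma}_\infty^2/(np_n)\to0$. To upgrade to $\tilde Q_n\to0$ $\bbP$-a.s., I would exploit that $\tilde Q_n=\frac1n\sum_i\bbE_B[Q^i_n]$ is an average of \emph{row-wise independent} summands, each depending only on row $i$ of $\xi$; a fourth-moment (Rosenthal) estimate then gives $\mathrm{Var}_\xi(\tilde Q_n)=O\big(1/(n(np_n)^2)\big)=O(1/(n\log^2 n))$, which is summable, so Chebyshev and Borel--Cantelli give $\tilde Q_n\to0$ $\bbP$-a.s.

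Combining, the Gr\"onwall bound shows $\bbE_B[\sup_t\frac1n\sum_i|D^i_t|^2]\to0$ $\bbP$-a.s., hence $\sup_t d_{\text{bL}}(\mu^n_t,\bar\mu^n_t)\to0$ in $\bP$-probability for $\bbP$-a.e.\ graph. I expect the genuine obstacle to be the promotion of this to full $\bP\otimes\bbP$-a.s.\ convergence: the natural rate $1/(np_n)$ degrades to $\sim 1/\log n$ in the sparsest admissible regime $p_n\asymp\log n/n$, so a crude Borel--Cantelli in the Brownian variable fails. In the degenerate case $\gs\equiv 0$ there is no Brownian motion, the whole estimate is pathwise in $\xi$, and the $\bbP$-a.s.\ statement above is already the conclusion; in the non-degenerate case I would obtain the a.s.\ statement either through a Gaussian concentration estimate (the map from the Brownian path to $\sup_t d_{\text{bL}}$ being Lipschitz with a dimension-favourable constant) or by first proving the law of large numbers in probability and upgrading along a suitable subsequence.
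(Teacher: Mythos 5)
Your coupling strategy is sound, and essentially reproduces the paper's Section~\ref{sec:LLN}, precisely in the cases where the martingale part of $D^i_t=\theta^{i,n}_t-\bar\theta^{i,n}_t$ vanishes, i.e.\ when $\gs(\cdot)$ is constant (in particular $\gs\equiv 0$): there the Gr\"onwall estimate is pathwise in the Brownian motions, your graph functional $\int_0^T\frac1n\sum_i|B^i_s|^2\dd s$ is exactly the quantity $\int_0^T\frac1n\sum_i\Delta_i(s)\dd s$ of Lemma~\ref{lem:exp}, and your Chebyshev/fourth-moment control of it (a summable variance of order $1/(n(np_n)^2)$ across independent rows) is a legitimate, arguably lighter, substitute for the paper's exponential Markov bound; combined with the Bernstein degree bound (note you need the column sums $\sum_i\xi^{(n)}_{i,j}/p_n$ controlled as well as the row sums, since the term $\frac1n\sum_j\xi_{i,j}p_n^{-1}|D^j||D^i|$ gets transposed when you sum over $i$) and a Fubini argument, this yields the $\bP\otimes\bbP$-a.s.\ statement for constant $\gs$.

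The genuine gap is the non-degenerate, non-constant case $0<\gs_-\le\gs(\cdot)\le\gs_+$. Once you invoke It\^o and Burkholder--Davis--Gundy to absorb the martingale generated by $\gs(\theta^i)-\gs(\bar\theta^i)$, your output is $\bbE_B[\sup_t\frac1n\sum_i|D^i_t|^2]\to0$, i.e.\ convergence in $\bP$-probability only, whereas the theorem claims $\bP\otimes\bbP$-a.s.\ convergence; and neither of your proposed upgrades closes this. Extracting a subsequence gives almost sure convergence only along that subsequence, never for the full sequence. The Gaussian-concentration route presupposes that the map from the driving Brownian paths to $\sup_t d_{\text{bL}}(\mu^n_t,\bar\mu^n_t)$ is Lipschitz in the uniform norm with a small constant, but for an SDE with non-constant diffusion coefficient the It\^o map is not Lipschitz (not even continuous) in that topology, so this step would require genuinely different machinery and cannot be waved through. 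The paper avoids the problem entirely in this regime: it does not couple, but performs a Girsanov change of measure from $\bP^\xi_n$ to $\bP_n$, shows via Lemmas~\ref{lem:1} and~\ref{lem:BC} (which rest on the same graph estimates you use) that $\frac1n\log\bE_n[\exp(C\langle M^n\rangle_T)]\to0$ $\bbP(\dd\xi)$-a.s., concludes that $P^\xi_n$ satisfies the same LDP as $\overline P_n$ (Theorem~\ref{thm: LDP}), and then obtains almost sure convergence from the exponential decay of $P^\xi_n$ outside neighbourhoods of the McKean--Vlasov solution together with Borel--Cantelli. To salvage your coupling route for non-constant $\gs$ you would need at least a summable-in-$n$ bound on $\bP(\sup_t\frac1n\sum_i|D^i_t|^2>\gep)$, which an $L^1(\bP)$ estimate does not provide.
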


\medskip

The requirement of deterministic initial data is easily lifted to IID initial conditions under the assumption
that they are independent of the graph (and, of course, of the driving Brownians).

\medskip

From the viewpoint of the proof, Theorem~\ref{th:LLN0} may be viewed as two different statements.

\smallskip

\begin{itemize}
\item in the case of $\gs (\cdot)\equiv \gs \in [0, \infty)$, the proof follows by coupling the system on the ER graph and the system on the complete graph;
\item in the case of $0<\gs_- \le \gs (\cdot) \le \gs_+< \infty$, the result is a corollary of  a Large Deviation Principle (LDP) stating that, at the Large Deviations (LD) level, the system on ER graph and the complete graph system are indistinguishable, see Theorem \ref{thm: LDP}.
\end{itemize}

\medskip
In the next subsection we present the result related to Large Deviations.

\medskip

\subsection{The Large Deviation Principle}
Stating the LDP needs some preparation on the general LD approach (classical references are for example \cite{cf:DZ,cf:dupuis,cf:dH}).

Given a complete, separable metric space $\chi$, a rate function $I$ is a lower semicontinuous mapping $I:\chi \rightarrow [0,\infty]$ such that each level set $K_l= \left\{ x\in \chi : I(x) \le l \right\}$ is compact for all $l\ge0$ (sometimes $I$ is called a \textit{good} rate function).
Given $\left\{ P_n \right\}_{n\in\bbN}$ a sequence of probability measures on $\chi$ associated with its Borel $\sigma$-field, we say that $P_n$ satisfies a LDP (on $\chi$) with  rate function $I$ if for every measurable set $A\subset \chi$
\begin{equation}
	\label{eq:LDP}
-\inf_{x\in A^\circ}I(x) \, \le \, 	\liminf_{n\rightarrow\infty}\tfrac{1}{n}\log P_{n}\left(A^\circ\right)\,
\le \,
\limsup_{n\rightarrow\infty}\tfrac{1}{n}\log P_{n}\left(\bar{A}\right)\le-\inf_{x\in \bar{A}}I(x) ,
\end{equation}
where $A^\circ$ is the interior of $A$ and $ \bar{A}$ is its closure.

\medskip
Let us now recall that \eqref{eq:dnonRG}, or equivalently \eqref{eq:dRG} on a complete graph, satisfies a LDP, we refer to \cite[Theorem 3.1]{cf:BDF}. We choose to state the LDP for the empirical law of the process, that is for
\begin{equation}
\label{eq:L}
\overline{L}_n\, :=\,
\frac 1n \sum_{j=1}^n \gd_{\bar \theta_\cdot ^{j,n}} \in \cP \left( C^0([0,T]; \bbR) \right),
\end{equation}
but other LDP are possible. Namely, \cite[Theorem 5.1]{cf:DG} proves a LDP for the empirical measure $\bar{\mu}_\cdot^n$ seen as an element of $C^0\left([0,T];\cP(\R)\right)$ (recall \eqref{eq:mapPI}), yet our result includes this case. In Remark~\ref{rem:proj} we have pointed out the continuity of the projection $\pi_t$ and how to pass from $P_n$ to $\mu^n_\cdot \in C^0([0, T]; \cP(\bbR))$, therefore a corollary of a LDP for $\bar{L}_n$ is a LDP on $C^0([0, T]; \cP(\bbR))$ for the law of $\bar{\mu}^n_\cdot$ with LD functional given by the contraction principle: see for example \cite{cf:DPdH,cf:lucon} for the mathematical procedure and  \cite{cf:DG} for an explicit form of the LD functional in the full generality.

\medskip

We set  $\chi=  \cP \left( C^0([0,T]; \bbR) \right)$; since $C^0([0,T]; \bbR)$ is a metric space, $\chi$ is a complete, separable metric space once equipped (among various possibilities) with the  bounded Lipschitz distance. Define the probability measure $\overline P_n$ on $\chi$ by setting $\overline P_n(\cdot):= \bP( \overline{L}_n \in \cdot)$, of course $\chi$ equipped with the $\gs$-algebra of its Borel subsets, then \cite[Theorem 3.1]{cf:BDF} shows that $\overline P_n$ satisfies a LDP whose rate function concentrates on $\nu \in \chi$ such that $\nu_\cdot = \nu \circ \pi^{-1}_\cdot \in C^0([0,T]; \cP(\R))$ is solution of the McKean-Vlasov equation \eqref{eq:McKV}.

\medskip

We are now ready to state the main result of this subsection. For every realization of the graph $\xi$ define  the probability $P^{\xi}_{n}$  on $\chi$, by setting $P^{\xi}_{n}\left( \cdot \right) := \bP \left( L^n \in \cdot \right)$, where $L^n (\cdot)$ is defined as in \eqref{eq:L}, but replacing $\bar \theta_\cdot ^{j,n}$ with $ \theta_\cdot ^{j,n}$. In particular, $P^{\xi}_{n}$ is the empirical measure of the trajectories $ \theta_\cdot ^{j,n}$ solving \eqref{eq:dRG}.

\medskip

\begin{theorem}
	\label{thm: LDP}
	Assume that $\gs_->0$. If $\xi$ is an ER graph that satisfies \eqref{eq:p_n-1} and if the initial datum satisfies \eqref{eq:hypmu0.0} and $\int x^2\mu_0(\dd x)
	<\infty$, then $P^{\xi}_{n}$ satisfies the same LDP of $P_n$ $\bbP(\dd \xi)$-a.s..
\end{theorem}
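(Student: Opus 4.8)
The plan is to deduce the quenched LDP from the annealed one of \cite[Theorem 3.1]{cf:BDF} (the law of $\overline{L}_n$, which here plays the role of $P_n$) by the principle of \emph{exponential equivalence}, see \cite[Theorem 4.2.13]{cf:DZ}: if two sequences of laws are exponentially equivalent and one of them obeys an LDP with a good rate function, then so does the other, with the \emph{same} rate function. Since both $\theta^{i,n}_\cdot$ and $\bar\theta^{i,n}_\cdot$ are driven by the same Brownian motions $B^i$ and start from the same (deterministic) initial datum, we already have a natural coupling of $L^n$ and $\overline{L}_n$ on $\chi=\cP\left(C^0([0,T];\R)\right)$. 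It therefore suffices to prove that, for $\bbP(\dd\xi)$-almost every graph,
\begin{equation}
\limsup_{n\to\infty}\tfrac1n\log\bP\big(d_{\text{bL}}(L^n,\overline{L}_n)>\delta\big)\,=\,-\infty \qquad\text{for every }\delta>0.
\end{equation}

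First I would reduce this to a single scalar functional. Because $d_{\text{bL}}$ is realized over $1$-Lipschitz path-space test functions, evaluating such a function along the two empirical measures gives
\begin{equation}
d_{\text{bL}}(L^n,\overline{L}_n)\,\le\,\frac1n\sum_{i=1}^n\big\|\theta^{i,n}_\cdot-\bar\theta^{i,n}_\cdot\big\|_{\infty,[0,T]}\,=:\,G_n.
\end{equation}
Writing $\Delta^{i,n}:=\theta^{i,n}-\bar\theta^{i,n}$ and subtracting \eqref{eq:dnonRG} from \eqref{eq:dRG}, every term on the right-hand side is Lipschitz in the differences $\{\Delta^{j,n}\}_j$ — with coefficients controlled by the normalized out- and in-degrees $\tfrac1n\sum_j\xi^{(n)}_{i,j}/p_n$ and $\tfrac1n\sum_i\xi^{(n)}_{i,j}/p_n$, which are uniformly close to $1$ on the $\bbP$-full-measure set where \eqref{hyp:DGL} holds (guaranteed by \eqref{eq:p_n-1} via \cite[Proposition 1.3]{cf:DGL}) — \emph{except} for the genuinely new forcing
\begin{equation}
\mathcal{E}^{i,n}_t\,:=\,\frac1n\sum_{j=1}^n\Big(\frac{\xi^{(n)}_{i,j}}{p_n}-1\Big)\,\Gamma\big(\bar\theta^{i,n}_t,\bar\theta^{j,n}_t\big),
\end{equation}
and the multiplicative diffusion feedback $[\sigma(\theta^{i,n})-\sigma(\bar\theta^{i,n})]\dd B^i$. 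A Grönwall estimate on $\tfrac1n\sum_i\sup_{s\le t}|\Delta^{i,n}_s|$ then bounds $G_n$ by $C(T)$ times a functional of the $\mathcal{E}^{i,n}$ and of the martingale increments.

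The core of the argument is the super-exponential control of these two contributions, and here the split between the two sources of randomness is essential. The graph enters only through the centred, bounded, \emph{independent-across-$i$} weights $\xi^{(n)}_{i,j}/p_n-1$, which are moreover \emph{independent of the Brownian motions}; conditionally on the Brownians, each $\mathcal{E}^{i,n}_t$ is a sum of independent centred variables of variance $O(1/(np_n))$, so a Bernstein estimate combined with \eqref{hyp:DGL} both pins down the $\bbP$-full-measure good set of graphs and shows that the $\bP$-mean of the forcing functional vanishes as $n\to\infty$. For the fluctuations in the Brownian randomness I would use Gaussian concentration on Wiener space: the map $B\mapsto G_n$ is Lipschitz for the Cameron--Martin norm, and the point to be established is that its Lipschitz constant is $o(1/\sqrt n)$ — the extra smallness coming both from the fact that $\Delta^{i,n}$ is itself small and from the cancellation of the centred weights inherited by the sensitivity of $\mathcal{E}^{i,n}$ — which upgrades concentration around the vanishing mean to the required speed $-\infty$. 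The diffusion feedback is folded in through exponential (Bernstein-type) inequalities for the continuous martingales, using $\gs_-\le\sigma\le\gs_+$ and $|\sigma(\theta^{i,n})-\sigma(\bar\theta^{i,n})|\le L_\gs|\Delta^{i,n}|$.

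I expect the main obstacle to be precisely obtaining the genuine super-exponential rate ($-\infty$, rather than a finite speed-$n$ rate) at the connectivity threshold allowed by \eqref{eq:p_n-1}. When $np_n$ is of order $\log n$, the fluctuations of a single $\mathcal{E}^{i,n}_t$ are only polynomially small in $n$, so no pointwise or uniform-in-$i$ bound can succeed; the super-exponential decay must be extracted from the averaging over the \emph{independent rows} together with the smallness of the differences themselves, while keeping every estimate uniform over a $\bbP$-full-measure set of graphs. Controlling the multiplicative diffusion feedback, which couples the noise to the unknown $\Delta^{i,n}$, within this super-exponential bookkeeping is the other delicate point, and is the reason the hypothesis $\gs_->0$ (non-degenerate, possibly non-constant, diffusion) is imposed. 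Finally, the passage from the LDP for $\overline{L}_n$ on $\chi$ to the LDP for the time-marginal flow $\bar\mu^n_\cdot$ (as in \cite[Theorem 5.1]{cf:DG}) follows, as noted in the text around \eqref{eq:mapPI}, from the contraction principle applied to the continuous projections $\pi_t$.
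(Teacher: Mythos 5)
Your strategy --- exponential equivalence of the coupled empirical measures $L^n$ and $\overline{L}_n$ --- is genuinely different from the paper's, which never attempts a super-exponential coupling estimate. The paper instead writes \eqref{eq:dRG} as \eqref{eq:dnonRG} plus a drift perturbation, so that $\dd\bP_n^\xi/\dd\bP_n=\exp(M_T^n-\tfrac12\langle M^n\rangle_T)$, and shows via H\"older and the exponential-martingale bound (Lemma~\ref{lem:1}) that the two laws share the same LDP as soon as $\tfrac1n\log\bE_n[\exp\{C\langle M^n\rangle_T\}]\to0$; this last condition is then verified $\bbP(\dd\xi)$-a.s.\ by the Borel--Cantelli/Bernstein machinery of Section~\ref{sec:LLN} (Lemma~\ref{lem:BC}). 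The crucial point is that this only requires the Girsanov quadratic variation to be negligible \emph{at speed $n$}, not the probability of an $O(1)$ discrepancy between $L^n$ and $\overline{L}_n$ to decay \emph{faster than every} $e^{-cn}$, which is what \cite[Theorem 4.2.13]{cf:DZ} demands.

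That gap is real and, as far as I can see, not closable along the lines you sketch. Your two super-exponential inputs are asserted rather than proved. (i) For the Wiener-space concentration step you need the Cameron--Martin Lipschitz constant of $B\mapsto G_n$ to be $o(1/\sqrt n)$; but perturbing a single $B^j$ moves $\bar\theta^{j,n}$ by $O(1)$ and hence moves $\tfrac1n\sum_i\sup_t|\mathcal{E}^{i,n}_t|$ by an amount controlled only by the column sums $\tfrac1n\sum_i|\xi^{(n)}_{i,j}/p_n-1|\le K$, so the naive Lipschitz constant is $O(K/\sqrt n)$ --- Gaussian concentration then gives speed $n$, not speed $\infty$, and the ``cancellation of the centred weights'' you invoke to gain the extra factor is exactly the missing argument (the supremum defining the Lipschitz constant is over adversarial directions $h$, which can be aligned with the signs of $\xi^{(n)}_{i,j}/p_n-1$). (ii) The multiplicative martingale feedback $[\sigma(\theta^{i,n})-\sigma(\bar\theta^{i,n})]\dd B^i$ produces, after It\^o and averaging over $i$, a martingale whose exponential inequality again yields only speed-$n$ tails, so a further unproved bootstrap would be needed. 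There is also a smaller inaccuracy: \eqref{eq:p_n-1} with a small value of $\liminf_n np_n/\log n$ does \emph{not} imply \eqref{hyp:DGL} (normalized degrees tending to $1$); Lemma~\ref{lem:max-degree} only gives $\sup_i\sum_j|\xi^{(n)}_{i,j}/p_n-1|\le Kn$ with a possibly large $K=K_C$, which is why the paper carries the constant $K$ through the Gr\"onwall bound instead of replacing it by $o(1)$. None of this invalidates the general idea of comparing the two systems pathwise --- that is precisely what the paper does for constant $\sigma$ in Section~\ref{sec:LLN}, where the comparison is \emph{deterministic} in the Brownian paths and no super-exponential estimate is needed --- but for non-constant $\sigma$ the exponential-equivalence route requires estimates strictly stronger than those available, and you should switch to the change-of-measure argument.
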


\medskip

%

\subsection{A look at the literature}
We recall that for interacting particle systems on the complete graph, i.e. \eqref{eq:dnonRG}, many results on the LLN are available and many of them, as \cite{cf:DGL}, include propagation of chaos properties. However, as already mentioned, propagation of chaos results are very demanding on the initial condition.

The literature is vast and difficult to be properly cited:  we mention the seminal contribution   \cite{cf:mckean66}   and  we mention again \cite{cf:oel,cf:szni,cf:mele,cf:G88}, that are also useful source of more references and that are not limited to propagation of chaos results, in the sense that also the case of deterministic initial data is treated.
For the $\gs(\cdot)\equiv 0$ case, we mention the important original contributions \cite{cf:dobrushin,cf:neunzert}
that gave origin to  a vast literature that goes beyond our purposes.

Large deviation properties for mean field diffusions have been studied in the seminal work by Dawson and G\"{a}rtner \cite{cf:DG}, but also in \cite{cf:dH,cf:fengKurtz,cf:lucon} in the so called gradient case. In \cite{cf:BDF} the problem is attacked in great generality using an approach based on weak convergence and control theory.

The LLN case has already been adressed in the literature, even if few results seem to have been proven so far. As mentioned, \cite{cf:DGL} proves a LLN for $\mu^n_\cdot$ requiring the initial datum to be a product measure: the case $\gs(\cdot)\equiv \gs \ge 0$ is considered.  In the same spirit, from the initial datum viewpoint, but for a time-varying graph and for multi-type processes, there is the work of \cite{cf:BBW}. It is important to mention at this stage that in \cite{cf:BBW} the interaction is renormalized by the number of neighbors of each site $i$: we normalize instead by the expected number of neighbors.

Turning to LD results, the recent work of \cite{cf:reisOliv} extends the LDP for Hamiltonian systems in random media, presented in \cite{cf:DPdH}, to (sparse) random interactions which include symmetric ER random graphs. The convergence of the empirical measure is shown under the assumption $\lim_n np_n = \infty$, without requiring any $\log$ divergence. However, they still focus on IID initial conditions and constant diffusion term $\sigma (\cdot) \equiv 1$.

\smallskip
Focusing on the case $\gs(\cdot)\equiv 0$, we mention the contributions:
\smallskip

\begin{itemize}
\item in
\cite{cf:brecht} one finds the stability analysis  for  the stationary state of an ordinary differential equations  system with ER interacting network, requiring a
logarithmic divergence of $p_n n$;
\item in \cite{cf:chibaMed}
 the Kuramoto model, i.e.  $\Gamma (x,y) = \sin (x-y)$ and $F(\cdot)$ is a random constant (\emph{natural frequencies}), is studied with an interaction network that is given by a \emph{graphon}: this leads to a more general limit equation, but their approach includes the case of ER graphs (in this case the graphon is trivial) with
 $p_n$ that tends to a positive constant.
 In  \cite{cf:medvedev}  the case of sparse graphs is considered: for ER graphs the condition is
 $\lim_{n}p_n \sqrt{n}=\infty$.
 \end{itemize}

\medskip

In many of the papers we cite, notably \cite{cf:DGL,cf:DPdH,cf:lucon,cf:chibaMed,cf:medvedev}, another source
of randomness is allowed: for example, in the Kuramoto model this corresponds to the important feature that each oscillator has a priori its own oscillation frequency and, more generally, with this extra source of randomness we can model systems in which
the interacting diffusions (or units, agents,$\ldots$) are not identical. This source of randomness is chosen independently of the graph and of the dynamical noise. All the results we have presented generalize easily to this case, but at the expense of heavier notations and heavier expressions. We have chosen not to treat this case for sake of conciseness and readability.

\medskip

The rest of the paper is devoted to the proofs. Section~\ref{sec:LLN} contains  the proof of Theorem~\ref{th:LLN0} in the case of constant (possibly degenerate) diffusion coefficient.  Section~\ref{sec:LDP} contains
the proof of  Theorem~\ref{thm: LDP}.

\section{The Law of Large Numbers: the proof}
\label{sec:LLN}
This section is devoted to the proof of Theorem~\ref{th:LLN0} in the case $\gs(\cdot)\equiv \gs \in [0, \infty)$: we recall that the case of non trivial and non degenerate diffusion is a corollary of the LDP (Theorem~\ref{thm: LDP}).
We start with  two preliminary lemmas that will be used for Proposition \ref{th:wips}, from which 
Theorem~\ref{th:LLN0} follows.

\medskip
\begin{lemma}
	\label{lem:max-degree}
	Let $K>2$. For all $n\in \N$, it holds
	\begin{equation}
	\label{eq:max-degree}
	\p \left( \sum_{j=1}^{n} \left|\frac{\xi_{i,j}^{(n)}}{p_n} - 1 \right| \ge K n \right) \le \exp
	\left(-\frac{3(K-2)^2 }{6 + 2(K-2)} p_n n\right).
	\end{equation}
	In particular, under hypothesis \eqref{eq:p_n-1} and setting $C:=\liminf_{n \to \infty}  \frac{p_n n}{\log n}\in (0, \infty]$, we have that,  if $K>K_C := 2 + \frac 2{3C} +  \sqrt{\frac{4}{9C^2} + \frac{4}{C}}$,  then    $\bbP(\dd \xi)$-a.s.
	there exists $n_0=n_0(\xi)< \infty$ such that for $n \ge n_0$
	\begin{equation}
	\label{eq:max-degree2}
	\max \left(\sup_{i=1,\dots,n} \sum_{j=1}^{n} \left|\frac{\xi_{j,i}^{(n)}}{p_n} - 1 \right|, \sup_{i=1,\dots,n} \sum_{j=1}^{n} \left|\frac{\xi_{i,j}^{(n)}}{p_n} - 1 \right| \right)  \le Kn\, .
	\end{equation}
\end{lemma}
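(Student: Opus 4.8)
The plan is to establish the one-shot bound \eqref{eq:max-degree} with a Bernstein inequality for sums of independent bounded variables, and then to upgrade it to the almost-sure statement \eqref{eq:max-degree2} by a union bound over the $2n$ row and column sums followed by Borel--Cantelli. For \eqref{eq:max-degree}, first I would fix $i$ and set $Z_j := \vert \xi_{i,j}^{(n)}/p_n - 1\vert$ for $j=1,\dots,n$, so that the $Z_j$ are IID, taking the value $(1-p_n)/p_n$ with probability $p_n$ and the value $1$ with probability $1-p_n$. Direct computations give the mean $\bbE[Z_j]=2(1-p_n)$, the variance $\mathrm{Var}(Z_j)=(1-p_n)(1-2p_n)^2/p_n\le 1/p_n$, and, for the centred variables $Y_j:=Z_j-\bbE[Z_j]$, the uniform bound $\vert Y_j\vert\le 1/p_n$ (both facts are immediate from the two possible values together with $1-p_n\le1$ and $\vert 1-2p_n\vert\le1$). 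Since $\bbE\big[\sum_j Z_j\big]=2n(1-p_n)\le 2n< Kn$, the event $\{\sum_j Z_j\ge Kn\}$ is contained in $\{\sum_j Y_j\ge (K-2)n\}$; this is exactly where the hypothesis $K>2$ enters, guaranteeing a positive threshold.

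Next I would apply Bernstein's inequality $\p(\sum_j Y_j\ge t)\le \exp\big(-\tfrac{t^2/2}{v+Mt/3}\big)$ with $t=(K-2)n$, total variance $v=n\,\mathrm{Var}(Z_1)$ and uniform bound $M$ on $\vert Y_j\vert$. The key point is that this bound is increasing in both $v$ and $M$, so replacing them by the clean upper bounds $v\le n/p_n$ and $M\le 1/p_n$ only weakens the estimate while making the constant collapse exactly to the target:
\begin{equation*}
\p\Big(\sum_{j=1}^n Z_j\ge Kn\Big)\le \exp\!\left(-\frac{t^2/2}{\frac{n}{p_n}+\frac{1}{p_n}\frac{t}{3}}\right),\qquad
\frac{t^2/2}{\frac{n}{p_n}+\frac{1}{p_n}\frac{t}{3}}=\frac{(K-2)^2n^2/2}{\frac{n}{p_n}\big(1+\frac{K-2}{3}\big)}=\frac{3(K-2)^2}{6+2(K-2)}\,p_n n,
\end{equation*}
which is precisely \eqref{eq:max-degree}.

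For \eqref{eq:max-degree2}, I note that for each fixed $i$ the column variables $\{\xi_{j,i}^{(n)}\}_{j}$ are also IID B($p_n$), so the same bound holds for $\sum_j\vert \xi_{j,i}^{(n)}/p_n-1\vert$. A union bound over these $2n$ events shows that the probability that the left-hand side of \eqref{eq:max-degree2} exceeds $Kn$ is at most $2n\exp\big(-\tfrac{3(K-2)^2}{6+2(K-2)}p_n n\big)$. Writing $C=\liminf_n p_n n/\log n$ and fixing any small $\gep>0$, eventually $p_n n\ge (C-\gep)\log n$ (any fixed constant in place of $C-\gep$ when $C=\infty$), so this summand is bounded by $2\,n^{1-\frac{3(K-2)^2}{6+2(K-2)}(C-\gep)}$, which is summable in $n$ as soon as $\tfrac{3(K-2)^2}{6+2(K-2)}(C-\gep)>2$. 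The threshold condition $K>K_C$ is exactly $\tfrac{3(K-2)^2}{6+2(K-2)}\,C>2$, i.e. $3C(K-2)^2-4(K-2)-12>0$, whose positive root in the variable $K-2$ is $\tfrac{2}{3C}+\sqrt{\tfrac{4}{9C^2}+\tfrac{4}{C}}$, that is $K_C$. Hence for $K>K_C$ one may choose $\gep$ small enough that the series converges, and Borel--Cantelli yields $\bbP(\dd\xi)$-a.s. a finite random $n_0(\xi)$ beyond which the bad event never occurs, which is \eqref{eq:max-degree2}.

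The hard part is essentially the bookkeeping of constants rather than any conceptual difficulty: one must centre correctly (the mean $2(1-p_n)$ forces $K>2$) and feed Bernstein the sharp inputs $v\le n/p_n$, $M\le 1/p_n$ so that the abstract exponent reduces \emph{exactly} to $\tfrac{3(K-2)^2}{6+2(K-2)}p_n n$; any looser bound would change the constant and hence the value of $K_C$. The remaining verification—that $K_C$ is the positive root of the quadratic $3C(K-2)^2-4(K-2)-12$, and the separate (easier) treatment of $C=\infty$, where $K_C=2$ and the decay is super-polynomial—is elementary algebra.
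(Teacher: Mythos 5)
Your proposal is correct and follows essentially the same route as the paper: centre the variables $\vert\xi_{i,j}^{(n)}/p_n-1\vert$ around their mean $2(1-p_n)$, apply Bernstein's inequality with the bounds $\mathrm{Var}\le 1/p_n$ and $M\le 1/p_n$ to get exactly the exponent $\tfrac{3(K-2)^2}{6+2(K-2)}p_nn$, then use a union bound over rows and columns and Borel--Cantelli, with $K_C$ arising as the positive root of $3C(K-2)^2-4(K-2)-12$. You in fact spell out the ``elementary computations'' (summability threshold and the quadratic for $K_C$) that the paper leaves implicit.
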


\medskip

\begin{proof}
	We  use Bernstein's inequality (see for example \cite[Corollary 2.11]{cf:bouche}) which says that if $X_1, \dots, X_n$ are independent zero-mean random variables such that $|X_j| \le M$ a.s. for all $j$, then for all $t\ge 0$
\begin{equation*}
\p \left( \sum_{j=1}^{n} X_j > t \right) \,\le\,  \exp\left\{-\frac{\frac 12 t^2}{\sum_{j=1}^n \bbE[X_j^2] + \frac 13 Mt} \right\}.
\end{equation*}
Set $X_j = \left|\frac{\xi_{i,j}^{(n)}}{p_n} - 1 \right| - 2(1 - p_n)$. $X_j$ is a zero-mean random variable and we can bound
\begin{equation}
\p \left( \sum_{j=1}^{n} \left|\frac{\xi_{i,j}^{(n)}}{p_n} - 1 \right| \ge Kn \right) \le \p \left(  \sum_{j=1}^{n} X_j \ge (K-2)n \right).
\end{equation}
We have $\vert X_j\vert \le \max(1/p_n -3+2p_n, 2p_n-1) \le 1/p_n=:M$ and
$\bbE[X_j^2] -1/p_n= -5 + 8 p_n - 4 p_n^2 \le -1$, so $\bbE[X_j^2] \le 1/p_n$
and  Bernstein's inequality together with an union bound show that
\begin{equation}
\p \left( \sup_{i=1,\dots,n} \sum_{j=1}^{n} \left|\frac{\xi_{i,j}^{(n)}}{p_n} - 1 \right| \ge K n \right) \le n \exp
\left(-\frac{3(K-2)^2 }{6 + 2(K-2)} p_n n\right)\, .
\end{equation}
The proof is now completed with some elementary computations and by applying the Borel-Cantelli Lemma.
\end{proof}

\medskip
\begin{lemma}
	\label{lem:exp}
	Assume \eqref{eq:p_n-1} and	let
	\begin{equation}
	\label{eq:delta_i}
	\Delta_i (s) :=  \left| \frac{1}{n} \sum_{j=1}^{n} 	\left(\frac{\xi_{i,j}^{(n)}}{p_{n}} - 1\right) \Gamma\left(\bar\theta_s^{i,n},\bar\theta_s^{j,n}\right) \right|^2, \quad \text{for every } s \in [0,T].
	\end{equation}
	Then, for every realization of the Brownian motions, it holds that
	\begin{equation}
	 \lim_{n \to \infty} \int_0^T \frac{1}{n} \sum_{i=1}^{n} \Delta_i (s) \dd s = 0, \quad  \bbP(\dd \xi)\text{-a.s.}.
	\end{equation}
\end{lemma}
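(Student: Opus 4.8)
The plan is to exploit the fact that the annealed trajectories $\bar\theta^{i,n}_\cdot$ solving \eqref{eq:dnonRG} do not depend on the graph $\xi$. Once a realization of the Brownian motions is fixed, the coefficients $\Gamma(\bar\theta^{i,n}_s,\bar\theta^{j,n}_s)$ are deterministic, and the only randomness left in $\Delta_i(s)$ comes from the Bernoulli variables. Setting $a_{ij} := \xi^{(n)}_{i,j}/p_n - 1$ (independent, centered, with $\bbE[a_{ij}^2]=(1-p_n)/p_n\le 1/p_n$ and $\bbE[a_{ij}^4]\le 2/p_n^3$) and $g_{ij}(s):=\Gamma(\bar\theta^{i,n}_s,\bar\theta^{j,n}_s)$ (deterministic, $|g_{ij}(s)|\le\norm{\Gamma}_\infty$), I write $Y_i(s):=\frac1n\sum_{j=1}^n a_{ij}g_{ij}(s)$, so that $\Delta_i(s)=Y_i(s)^2$, and introduce
\[
W_i := \int_0^T \Delta_i(s)\dd s, \qquad Z_n := \int_0^T \frac1n\sum_{i=1}^n \Delta_i(s)\dd s = \frac1n\sum_{i=1}^n W_i.
\]
The structural point is that $W_i$ is a function of the $i$-th row $\{\xi^{(n)}_{i,j}\}_{j}$ only, hence $W_1,\dots,W_n$ are independent.

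First I would bound the mean. Since $Y_i(s)$ is centered,
\[
\bbE[\Delta_i(s)] = \frac1{n^2}\,\frac{1-p_n}{p_n}\sum_{j=1}^n g_{ij}(s)^2 \le \frac{\norm{\Gamma}_\infty^2}{n\,p_n},
\]
so $\bbE[Z_n]\le T\norm{\Gamma}_\infty^2/(n p_n)$, which tends to $0$ under \eqref{eq:p_n-1}. This already gives $L^1$-convergence; it remains to upgrade it to an almost sure statement.

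The core of the proof is the second-moment estimate $\sum_n \mathrm{Var}(Z_n)<\infty$. By independence of the rows, $\mathrm{Var}(Z_n)=n^{-2}\sum_i\mathrm{Var}(W_i)\le n^{-2}\sum_i\bbE[W_i^2]$, and $\bbE[W_i^2]=\int_0^T\int_0^T \bbE[Y_i(s)^2Y_i(t)^2]\dd s\dd t$. Expanding the product of the four $a$-factors and using that they are independent and centered, only the index configurations in which the four factors pair up survive: the three ``two-pair'' patterns give $O((np_n)^{-2})$ and the ``all-equal'' pattern gives $O(n^{-1}(np_n)^{-3})$, whence, uniformly in $i$,
\[
\bbE[W_i^2] \le C\Big(\frac{1}{n^2p_n^2} + \frac{1}{n^3p_n^3}\Big),
\]
with $C=C(\norm{\Gamma}_\infty,T)$. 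Therefore
\[
\mathrm{Var}(Z_n) \le \frac{C}{n}\Big(\frac{1}{(np_n)^2} + \frac{1}{n(np_n)^3}\Big).
\]
Here hypothesis \eqref{eq:p_n-1} is decisive: writing $np_n\ge c\log n$ for $n$ large, the leading term is at most $C'/(n(\log n)^2)$, and $\sum_n 1/(n(\log n)^2)<\infty$.

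With summable variances, Chebyshev's inequality gives $\sum_n \bbP(|Z_n-\bbE[Z_n]|>\varepsilon)<\infty$ for every $\varepsilon>0$, and Borel--Cantelli yields $Z_n-\bbE[Z_n]\to 0$ $\bbP(\dd\xi)$-a.s.; combined with $\bbE[Z_n]\to0$ this gives $Z_n\to0$ a.s., which is the assertion. An alternative, avoiding the fourth-moment bookkeeping, is to invoke Lemma~\ref{lem:max-degree}: for $n\ge n_0(\xi)$ one has the uniform deterministic bound $\Delta_i(s)\le K^2\norm{\Gamma}_\infty^2$, so the truncations $W_i\wedge M$, with $M:=TK^2\norm{\Gamma}_\infty^2$, are independent and bounded, and Hoeffding's inequality applied to their average together with Borel--Cantelli gives the same conclusion on the full-probability event where the truncation is inactive. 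I expect the main obstacle to be the variance estimate: the crude bound $|a_{ij}|\le 1/p_n$ makes the summands as large as $1/p_n^2$, far too big for a direct concentration inequality, so one must work with the variance $\bbE[a_{ij}^2]=O(1/p_n)$ rather than the $L^\infty$ bound, which is exactly what produces the factor $(np_n)^{-2}$ and lets the logarithmic divergence $np_n\gtrsim\log n$ buy summability.
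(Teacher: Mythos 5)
Your proof is correct, and it takes a genuinely different (and more elementary) route than the paper. Both arguments start from the same structural observations: the annealed trajectories do not depend on $\xi$, so once the Brownian paths are fixed the quantity $\int_0^T\Delta_i(s)\dd s$ is a quadratic form $\frac{1}{(np_n)^2}\sum_{j,k}\hat\xi_{i,j}\hat\xi_{i,k}d_{ijk}$ in the centered Bernoulli variables, with bounded deterministic coefficients, and the rows of $\xi^{(n)}$ are independent. From there the paper runs a Chernoff-type bound: it restricts to the bounded-degree event $A_n^\complement$ of Lemma~\ref{lem:max-degree}, applies the exponential Markov inequality at scale $\delta_n n$ with $\delta_n\gg 1/(np_n)$, factorizes the exponential moment over the independent rows, and controls each factor via $e^x\le 1+|x|e^{|x|}$, Cauchy--Schwarz and a fourth-moment computation, arriving at $\bbP(\Omega_n\cap A_n^\complement)\le\exp(-n\delta_n+O(1/p_n))$. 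You instead feed essentially the same fourth-moment computation into a plain second-moment method: $\mathrm{Var}(Z_n)\le n^{-2}\sum_i\bbE[W_i^2]=O(n^{-1}(np_n)^{-2})$, which is summable precisely because $np_n\gtrsim\log n$, and Chebyshev plus Borel--Cantelli concludes. Your route is shorter and does not need Lemma~\ref{lem:max-degree} at all (though your sketched alternative via truncation and Hoeffding also works). What the paper's heavier route buys is (i) a quantitative rate, namely $Z_n\le\delta_n$ eventually a.s.\ for any $\delta_n\gg 1/(np_n)$, and (ii) reusability: the identical exponential-moment estimate is what is needed verbatim in Lemma~\ref{lem:BC} to verify condition \eqref{condLem} for the LDP, where a second-moment bound would not suffice. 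One cosmetic slip in your write-up: in the prose you describe the all-equal index pattern as contributing $O(n^{-1}(np_n)^{-3})$ to $\bbE[W_i^2]$, whereas it actually contributes $O((np_n)^{-3})=O(n^{-3}p_n^{-3})$; your displayed bound for $\bbE[W_i^2]$ has the correct $n^{-3}p_n^{-3}$, and the term is lower order in any case, so nothing downstream is affected.
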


\begin{proof}
	First, we rewrite $\int_{0}^{T} \Delta_i (s) \dd s$ as
\begin{equation}
\int_{0}^{T} \Delta_i (s) \dd s = \frac{1}{(np_n)^2}\sum_{j,k=1}^n \hat{\xi}_{i,j} \hat{\xi}_{i,k} d_{ijk},
\end{equation}
where we have dropped the superscript $(n)$, the dependency on $T$ and we have introduced the notations
\begin{equation}
\label{eq:notations}
 \hat{\xi}_{i,j}\, :=\,  \xi_{i,j}^{(n)} - p_n\ \ \ \text{ and } \ \ \
d_{ijk} \,:=\,  \int_{0}^{T}  \left[\Gamma\left(\bar\theta_s^{i,n},\bar\theta_s^{j,n}\right) \Gamma\left(\bar\theta_s^{i,n},\bar\theta_s^{k,n}\right)\right] \dd s\, .
\end{equation}
Observe that $\hat{\xi}_{i,j}$ are centered random variables and $\left|d_{ijk} \right| \le T \norm{\Gamma}_\infty^2 =: d_\star$.

Let $\delta_n$ be a sequence of positive numbers such that (recall \eqref{eq:p_n-1})
\begin{equation}
	\label{eq:constant}
	\delta_n \gg \frac 1{p_nn} \ \ \text{and}  \ \ \lim_{n\to \infty}\delta_n = 0.
	\end{equation}
Let $\Omega_n$ be the set
\begin{equation}
\label{eq:OmeganLLN}
	\Omega_n \, :=\, \left\{ \xi : \frac{1}{(np_n)^2}\sum_{i,j,k=1}^n \hat{\xi}_{i,j} \hat{\xi}_{i,k} d_{ijk}  > \delta_n n  \right\}\, ,
\end{equation}
We want to show that
\begin{equation}
\sum_{n \in \bbN} \bbP \left( \Omega_n \right) < \infty.
\end{equation}
Let $K>2$ and consider the events
\begin{equation}
\label{eq:An}
A_n = \bigcup_{i=1}^{n} A_{n,i} \ \  \text{ with } \ \  A_{n,i} = \left\{ \xi^{(n)} : \sum_{j} \left|\frac{\xi^{(n)}_{i,j}}{p_n} - 1\right| > Kn.
\right\}.
\end{equation}
We use
\begin{equation}
\bbP \left( \Omega_n \right) \le \bbP \left( \Omega_n \cap A_n^\complement \right) + \bbP \left( A_n \right).
\end{equation}
and Lemma \ref{lem:max-degree} ensures  that choosing  $K>K_{C}(>2)$ we have
\begin{equation}
\label{eq:An-con}
\sum_{n\in\N} \bbP(A_n) < \infty,
\end{equation}
so that one is left with proving that $\sum_{n \in \bbN} \bbP \left( \Omega_n \cap A_n^\complement \right) < \infty$.
By Markov's inequality applied to $\bbP\left(\, \cdot \, \vert  A_n^\complement \right)$ we see that
\begin{equation}
\bbP(\Omega_n \cap A_n^\complement)
\le \exp\left(-n \delta_n + \log \bbE \left[\ind_{A_n^\complement}\exp\left(\frac{1}{(np_n)^2}\sum_{i,j,k=1}^n \hat{\xi}_{i,j} \hat{\xi}_{i,k} d_{ijk}\right)\right] \right).
\end{equation}
Given \eqref{eq:constant}, it suffices to show that
\begin{equation}
\label{goalLLN}
\log \bbE \left[\ind_{A_n^\complement}\exp\left(\frac{1}{(np_n)^2}\sum_{i,j,k=1}^n \hat{\xi}_{i,j} \hat{\xi}_{i,k} d_{ijk}\right)\right] \,=\, n\; O\left(\frac 1{p_nn}\right)\, =\, O\left(\frac 1{p_n}\right) \, .
\end{equation}

We exploit the independence w.r.t. $i$:
\begin{equation}
\label{eq:2fLLN}
\bbE \left[\ind_{A_n^\complement}\exp\left(\frac{1}{(np_n)^2}\sum_{i,j,k=1}^n \hat{\xi}_{i,j} \hat{\xi}_{i,k} d_{ijk}\right)\right] = \prod_{i} \bbE \left[\ind_{A_{n,i}^\complement}\exp\left(\frac{1}{(np_n)^2}\sum_{j,k=1}^n \hat{\xi}_{i,j} \hat{\xi}_{i,k} d_{ijk}\right)\right]\, ,
\end{equation}
and use the inequality $\exp(x) \le 1  + |x|\exp|x|$ which holds for all $x\in \bbR$, together with Cauchy-Schwarz and obtain
\begin{equation}
\label{eq:3.21}
\begin{aligned}
&\bbE\left[	\ind_{A_{n,i}^\complement}	\exp\left(	\frac{1}{(np_n)^2} \sum_{j,k} \hat{\xi}_{i,j} 	\hat{\xi}_{i,k} d_{ijk} 	\right)	\right] \le\\
&1 +	\bbE\left[	\ind_{A_{n,i}^\complement}	\left|	\frac{1}{(np_n)^2} \sum_{j,k} \hat{\xi}_{i,j}	\hat{\xi}_{i,k} d_{ijk}	\right|	\exp\left(	\left|
\frac{1}{(np_n)^2} \sum_{j,k} \hat{\xi}_{i,j}	\hat{\xi}_{i,k} d_{ijk}	\right|	\right)	\right] \le \\
&1 +	\bbE\left[	\left(	\frac{1}{(np_n)^2} \sum_{j,k} \hat{\xi}_{i,j}	\hat{\xi}_{i,k} d_{ijk}	\right)^2
\right]^{1/2}	\bbE\left[	\ind_{A_{n,i}^\complement}	\exp\left(	\frac{2}{(np_n)^2} \sum_{j,k} \hat{\xi}_{i,j}	\hat{\xi}_{i,k} d_{ijk}	\right) \right]^{1/2}\, .
\end{aligned}
\end{equation}
Under the condition	that we are in $A_{n,i}^\complement$, it holds that
\begin{equation}
\left|\frac{2}{(np_n)^2} \sum_{j,k} \hat{\xi}_{i,j} \hat{\xi}_{i,k} d_{ijk}	\right|	\le 2   K^2 d_\star
\end{equation}
so that the exponential expectation can be bounded as $\exp{\left\{2 K^2 d_\star\right\}}$. Estimating the moment expectation leads to
\begin{align}
	\bbE \left[ \left(	\frac{1}{(np_n)^2} \sum_{j,k} \hat{\xi}_{i,j}	\hat{\xi}_{i,k} d_{ijk}	\right)^2 \right] = \frac{1}{(np_n)^4} \sum_{j,k,p,q} \bbE \left[\hat{\xi}_{i,j}	\hat{\xi}_{i,k} \hat{\xi}_{i,p} \hat{\xi}_{i,q}\right] d_{ijk} d_{ipq} \le \\
	\le \frac{d^2_\star}{(np_n)^4} \left[ np_n + 3(np_n)^2 \right] \le \frac{4d^2_\star}{(np_n)^2}.
\end{align}
From \eqref{eq:3.21}, we get
\begin{equation}
\bbE\left[	\ind_{A_{n,i}^\complement}	\exp\left(	\frac{1}{(np_n)^2} \sum_{j,k} \hat{\xi}_{i,j} 	\hat{\xi}_{i,k} d_{ijk} 	\right)	\right] \le
1 + \frac{2 d_\star}{np_n}\exp{\left\{2   K^2 d_\star\right\}}.
\end{equation}
Putting everything back in \eqref{eq:2fLLN}, one obtains
\begin{equation}
\bbE \left[\ind_{A_n^\complement}\exp\left(\frac{1}{(np_n)^2}\sum_{i,j,k=1}^n \hat{\xi}_{i,j} \hat{\xi}_{i,k} d_{ijk}\right)\right] \le \exp\left\{ \frac{2d_\star}{p_n}\exp{\left\{2   K^2 d_\star\right\}}\right\}\, ,
\end{equation}
which gives \eqref{goalLLN}.
\end{proof}

\bigskip
We are now ready for

\begin{proposition}
	\label{th:wips}
	If \eqref{eq:p_n-1} holds, then for all $T>0$,
	\begin{equation}
	\lim_{n\to \infty}   \frac{1}{n} \sum_{i=1}^n \sup_{t\in[0,T]} \left| \theta^{i,n}_t - \bar{\theta}^{i,n}_t\right|^2 = 0, \quad  \bP \otimes \bbP\text{-a.s.}.
	\end{equation}
\end{proposition}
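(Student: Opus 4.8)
The plan is to use that, when $\gs(\cdot)\equiv\gs$ is constant, the diffusion terms in \eqref{eq:dRG} and \eqref{eq:dnonRG} coincide and therefore cancel in the difference. Setting $w^i_t:=\theta^{i,n}_t-\bar\theta^{i,n}_t$, and recalling that the two systems share the same deterministic initial datum and the same Brownian motions (so $w^i_0=0$ and no stochastic integral survives), subtracting the two equations and adding and subtracting gives the splitting
\[
\frac1n\sum_j\frac{\xi^{(n)}_{i,j}}{p_n}\Gamma(\theta^{i,n}_s,\theta^{j,n}_s)-\frac1n\sum_j\Gamma(\bar\theta^{i,n}_s,\bar\theta^{j,n}_s)
=\frac1n\sum_j\frac{\xi^{(n)}_{i,j}}{p_n}\big[\Gamma(\theta^{i,n}_s,\theta^{j,n}_s)-\Gamma(\bar\theta^{i,n}_s,\bar\theta^{j,n}_s)\big]+\frac1n\sum_j\Big(\frac{\xi^{(n)}_{i,j}}{p_n}-1\Big)\Gamma(\bar\theta^{i,n}_s,\bar\theta^{j,n}_s).
\]
I would bound the first two contributions (the one above and the $F$-difference) using the Lipschitz constants $L_F,L_\Gamma$, and note that the last term has absolute value $\Delta_i(s)^{1/2}$, the quantity controlled by Lemma~\ref{lem:exp}.

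I then place myself, by Lemma~\ref{lem:max-degree}, on the $\bbP(\dd\xi)$-a.s. event on which, for some $K>K_C$ and all $n\ge n_0(\xi)$, both the normalized out-degrees $\frac1n\sum_j\xi^{(n)}_{i,j}/p_n$ and the in-degrees $\frac1n\sum_i\xi^{(n)}_{i,j}/p_n$ are bounded by $1+K$. The key device for obtaining $\frac1n\sum_i\sup_t|w^i_t|^2$ (supremum inside the average) rather than $\sup_t\frac1n\sum_i|w^i_t|^2$ is to work with $v_i(t):=\sup_{s\le t}|w^i_s|$ and $\Psi(t):=\frac1n\sum_i v_i(t)^2$, keeping the Gronwall time-integral structure intact. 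Since every drift contribution is increasing in the upper integration limit, the Lipschitz bounds and the diagonal-degree bound yield
\[
v_i(t)\le\big(L_F+(1+K)L_\Gamma\big)\int_0^t v_i(s)\dd s+\frac{L_\Gamma}{n}\sum_j\frac{\xi^{(n)}_{i,j}}{p_n}\int_0^t v_j(s)\dd s+\int_0^t\Delta_i(s)^{1/2}\dd s.
\]
Squaring (via $(a+b+c)^2\le3(a^2+b^2+c^2)$), applying Cauchy--Schwarz in time, averaging over $i$, and treating the quadratic cross term with the weighted inequality $\big(\frac1n\sum_j a_{ij}v_j\big)^2\le\big(\frac1n\sum_j a_{ij}\big)\big(\frac1n\sum_j a_{ij}v_j^2\big)$ (with $a_{ij}=\xi^{(n)}_{i,j}/p_n$) followed by the in/out-degree bounds, I arrive at
\[
\Psi(t)\le C_1\int_0^t\Psi(s)\dd s+C_2\int_0^T\frac1n\sum_i\Delta_i(s)\dd s,
\]
with $C_1,C_2$ depending only on $T,L_F,L_\Gamma,K$.

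Gronwall's lemma then gives $\Psi(T)\le C_2 e^{C_1T}\int_0^T\frac1n\sum_i\Delta_i(s)\dd s$, and Lemma~\ref{lem:exp} (vanishing of the right-hand side $\bbP(\dd\xi)$-a.s., for every realization of the Brownian motions) forces $\Psi(T)\to0$, $\bP\otimes\bbP$-a.s., which is the claim. I expect the main obstacle to be precisely the location of the supremum: the naive route of bounding $\sup_{t\le T}|w^i_t|^2$ by a sum of squared time-$T$ integrals makes $\Psi(T)$ reappear on the right with a coefficient of order $T^2$, so the estimate closes only for small $T$; threading $v_i(t)$ through the integral as a running supremum is what makes it close for every $T>0$. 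A secondary point requiring care is that the cross term genuinely couples all the particles, so both the in-degree and the out-degree bounds of Lemma~\ref{lem:max-degree} are needed to average it out.
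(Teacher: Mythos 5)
Your proposal is correct and follows essentially the same route as the paper: the same splitting of the drift difference into a Lipschitz part and the fluctuation term $\Delta_i(s)^{1/2}$, the same use of both the in- and out-degree bounds of Lemma~\ref{lem:max-degree} together with Lemma~\ref{lem:exp}, and the same Gronwall closure. The one point where you are more careful than the paper is the location of the supremum: the paper applies Gronwall to $S_n(t)=\frac1n\sum_{i}|\theta^{i,n}_t-\bar\theta^{i,n}_t|^2$ and literally obtains only $\sup_{t\le T}S_n(t)\to 0$ (supremum outside the average, which is all that Theorem~\ref{th:LLN0} actually needs), whereas your running-supremum device $v_i(t)=\sup_{s\le t}|w^i_s|$ delivers the statement exactly as written, with the supremum inside the average; the paper's argument upgrades to this in the same way, since the right-hand side of its integral inequality is nondecreasing in $t$.
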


\begin{proof}
	For $i \in \{1,\dots,n\}$, consider
	\begin{multline}
	 \left| \theta^{i,n}_t - \bar{\theta}^{i,n}_t\right|^2 = \\
	  2 \int_{0}^{t} \left(\theta^{i,n}_s - \bar{\theta}^{i,n}_s\right) \left( F\left(\theta^{i,n}_s\right) - F\left(\bar{\theta}^{i,n}_s\right) + \frac{1}{n} \sum_{j=1}^{n}\left[\frac{\xi_{i,j}^{(n)}}{p_{n}}\Gamma\left(\theta_s^{i,n},\theta_s^{j,n}\right) - \Gamma\left(\bar\theta_s^{i,n},\bar\theta_s^{j,n}\right) \right]\right) \dd s \le \\
	 2 L_F \int_{0}^{t} \left| \theta^{i,n}_s - \bar{\theta}^{i,n}_s \right|^2 \dd s +
	2 L_\Gamma  \frac{1}{n} \sum_{j=1}^{n} \frac{\xi_{i,j}^{(n)}}{p_{n}} \int_{0}^{t} \left(\left|\theta_s^{i,n} - \bar{\theta}_s^{i,n} \right| + \left|\theta_s^{j,n} - \bar{\theta}_s^{j,n} \right|\right) \left|\theta_s^{i,n} - \bar{\theta}_s^{i,n} \right| \dd s \\
	+ 2 \int_{0}^{t} \left| \frac{1}{n} \sum_{j=1}^{n} \left(\frac{\xi_{i,j}^{(n)}}{p_{n}} - 1\right) \Gamma\left(\bar\theta_s^{i,n},\bar\theta_s^{j,n}\right) \right| \left|\theta_s^{i,n} - \bar{\theta}_s^{i,n} \right| \dd s,
	\end{multline}
	which gives
	\begin{multline}
	 \left| \theta^{i,n}_t - \bar{\theta}^{i,n}_t\right|^2 \le \\
	(2 L_F +1)\int_{0}^{t} \left| \theta^{i,n}_s - \bar{\theta}^{i,n}_s \right|^2 \dd s +
	L_\Gamma  \frac{1}{n} \sum_{j=1}^{n} \frac{\xi_{i,j}^{(n)}}{p_{n}} \int_{0}^{t} \left[ 3 \left|\theta_s^{i,n} - \bar{\theta}_s^{i,n} \right|^2 + \left|\theta_s^{j,n} - \bar{\theta}_s^{j,n} \right|^2 \right]\dd s \\
	+  \int_{0}^{t} \left| \frac{1}{n} \sum_{j=1}^{n} \left(\frac{\xi_{i,j}^{(n)}}{p_{n}} - 1\right) \Gamma\left(\bar\theta_s^{i,n},\bar\theta_s^{j,n}\right) \right|^2\dd s.
	\end{multline}
	Summing over $i$ and dividing by $n$, one obtains
	\begin{multline}
	 \frac 1n \sum_{i=1}^{n} \left| \theta^{i,n}_t - \bar{\theta}^{i,n}_t\right|^2 \le \\
	\le  \left(2 L_F +1 + L_\Gamma \sup_{i=1,\dots,n} \sum_{j=1}^{n} \frac{3\xi_{i,j}^{(n)}+\xi_{j,i}^{(n)}}{np_{n}}\right) \int_{0}^{t} \frac 1n \sum_{i=1}^{n} \left| \theta^{i,n}_s - \bar{\theta}^{i,n}_s \right|^2 \dd s
	\\
	+ \frac 1n \sum_{i=1}^{n} \int_{0}^{t} \left| \frac{1}{n} \sum_{j=1}^{n} 	\left(\frac{\xi_{i,j}^{(n)}}{p_{n}} - 1\right) \Gamma\left(\bar\theta_s^{i,n},\bar\theta_s^{j,n}\right) \right|^2\dd s.
	\end{multline}
	In order to bound $ \sum_{j=1}^{n} \frac{3\xi_{i,j}^{(n)}+\xi_{j,i}^{(n)}}{np_{n}}$ for all $i=1,\dots,n$, we
	choose $K>K_{C}$ and use Lemma \ref{lem:max-degree} to obtain that
	\begin{equation}
	\label{goal:max-degree}
	\sup_{i=1,\dots,n} \sum_{j=1}^{n} \frac{3\xi_{i,j}^{(n)}+\xi_{j,i}^{(n)}}{np_{n}} \le 4+ 4K\, ,
	\end{equation}
	 $\bbP (\dd \xi)$-a.s..
	The application of Gronwall lemma to
	\begin{equation}
	S_n(t) = \frac 1n \sum_{i=1}^{n} \left| \theta^{i,n}_t - \bar{\theta}^{i,n}_t\right|^2,
	\end{equation}
	leads to
	\begin{equation}
	\label{eq:gron}
	S_n(t) \le \int_0^t \exp{\left\{G (t-s)\right\}} \left( \frac 1n \sum_{i=1}^{n} \Delta_i (s)\right)\dd s,
	\end{equation}
	with $G= 2L_F +1 + (4+4K)L_\Gamma >0$ and $\Delta_i (s)$ defined in \eqref{eq:delta_i}.
	Therefore
	\begin{equation}
	\label{eq:gron-S}
	\sup_{t\in[0,T]} S_n(t) \le \exp{\left\{GT\right\}} \int_0^T \frac 1n \sum_{i=1}^{n} \Delta_i (s)\dd s.
	\end{equation}
	The last estimate is true for all realizations of the Brownian motions. Taking the limit for $n$ which tends to $\infty$ and integrating the RHS of \eqref{eq:gron-S}, first with respect to $\bbP$ (recall Lemma \ref{lem:exp}), completes the proof of Proposition~\ref{th:wips}.
\end{proof}

\bigskip
\noindent
\emph{Proof of Theorem~\ref{th:LLN0}.}
Since we already know that $\bar{\mu}^n_\cdot$ converges $\bP$-a.s. to $\mu_\cdot$ in $C^0([0,T]; \cP(\R))$ (see \cite[Theorem 1.6]{cf:G88}), it suffices to show that
\begin{equation}
\lim_{n\to\infty} \sup_{0\le t \le T} \dd_{\text{bL}}\left( \mu^n_t,\bar{\mu}^n_t\right) = 0, \quad \bP\otimes\bbP\text{-a.s..}
\end{equation}
For every $f:\R\to[0,1]$ 1-Lipschitz function, we have
\begin{equation}
\left|\int_{\R} f(\theta) \left( \mu_t^n - \bar{\mu}^n_t\right)(\dd\theta)\right| = \left|\tfrac{1}{n} \sum_{i=1}^n f(\theta^{i,n}_t) - f(\bar{\theta}^{i,n}_t)\right| \le \tfrac{1}{n} \sum_{i=1}^n \left|\theta^{i,n}_t - \bar{\theta}^{i,n}_t\right|.
\end{equation}
In particular,
\begin{equation}
\sup_{0\le t \le T} \dd_{\text{bL}}\left( \mu^n_t,\bar{\mu}^n_t\right) \le \sqrt{\frac 1n \sum_{i=1}^n \sup_{0\le t\le T}\left|\theta^{i,n}_t - \bar{\theta}^{i,n}_t\right|^2}.
\end{equation}
The proof follows from Proposition \ref{th:wips}.
\qed

\bigskip

\section{Large Deviation results: proofs.}
\label{sec:LDP}
The proof of Theorem~\ref{thm: LDP} relies on two results that contain most of the work.
We first prove Theorem~\ref{thm: LDP} assuming these two results, and prove them right after.

\bigskip

\noindent
\emph{Proof of Theorem~\ref{thm: LDP}.}
	Observe that we can write  (\ref{eq:dRG}) as
	\begin{equation}
		\label{eq:dRG2}
		\dd\theta_{t}^{i,n}\, =\, F\left(\theta_{t}^{i,n}\right)\dd 	t+\frac{1}{n}\sum_{j=1}^{n}\Gamma\left(\theta_{t}^{i,n},\theta_{t}^{j,n}\right)\dd t
		+\sigma \left(\theta_{t}^{i,n}\right) c_i\left( \theta_{t}^{n}\right) \dd t +
		\sigma \left(\theta_{t}^{i,n}\right) \dd B_{t}^{i},
	\end{equation}
	with
	\begin{equation}
	\label{eq:ci}
		c_i\left( \theta_{t}^{n}\right)\, :=\, \frac 1 {n \sigma \left(\theta_{t}^{i,n}\right)} 	{\sum_{j=1}^{n}\left(\frac{\xi_{i,j}^{(n)}}{p_{n}}-1\right)\Gamma\left(\theta_{t}^{i,n},\theta_{t}^{j,n}\right)}\, .
	\end{equation}
	Recall that $\bP_n^\xi$, respectively $\bP_n$, is the law of the trajectories $\{ \theta_{t}^{i,n}\}_{i=1, \ldots, n; \, t\in [0, T]}$,
	respectively  the law of $\{ \bar \theta_{t}^{i,n}\}_{i=1, \ldots, n; \, t\in [0, T]}$.
	The Radon-Nikodym derivative $\dd \bP_n^\xi /\dd \bP_n$ is $\exp(M_T^n-\langle  M^n \rangle_T/2)$ with
	\begin{equation}
	\label{eq:MT}
		M_T^n\, =\, \sum_{i=1}^n\int_0^T c_i(\theta^n_t) \dd \theta^{i,n}_t\ \ \textrm{ and } \ \
		\langle  M^n \rangle_T\, =\, \sum_{i=1}^n\int_0^T c^2_i(\theta^n_t) \dd t\, .
	\end{equation}

The following lemma is given for every realization of  $\xi^{(n)}$ and it has a deterministic nature.  Recall that $\chi=  \cP \left( C^0([0,T]; \bbR) \right)$ and $\overline{L}_n$ defined in \eqref{eq:L}. Then $\overline P_n(\cdot):= \bP( \overline{L}_n \in \cdot)$ is the law of the empirical process associated to \eqref{eq:dnonRG} and $P^\xi_n(\cdot):= \bP( L_n \in \cdot)$ is the one associated to \eqref{eq:dRG},  $\overline P_n$ and $P^\xi_n$ are probabilities on $\chi$.

\medskip

\begin{lemma}
	\label{lem:1}
	Suppose $\overline{P}_{n}\left( \cdot \right) $ satisfies a LDP on $\chi$
	with rate function $I$. If, for every  $C\in \bbR$,
\begin{equation}
\label{condLem}
	\lim_{n\rightarrow \infty} \frac 1n \log \bE_{n} \left[\exp\left\{C \langle  M^n \rangle_T \right\} \right] = 0,
	\end{equation}
	then $P^{\xi}_{n}\left( \cdot \right)$ satisfies a LDP on $\chi$ with the same rate function as $\overline{P}_{n}$.
\end{lemma}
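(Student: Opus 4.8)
The plan is to transfer the LDP from $\overline{P}_n$ to $P^{\xi}_n$ through the Girsanov change of measure, using \eqref{condLem} to show that the Radon-Nikodym density is negligible at the exponential scale $n$. The starting point is the identity relating the two laws. Since $L^n$ and $\overline{L}_n$ are the image of the canonical path $x\in C^0([0,T];\R^n)$ under the \emph{same} empirical-measure map $\Lambda(x)=\frac1n\sum_i\delta_{x^i}$, applied respectively under $\bP^\xi_n$ and $\bP_n$, the representation $\dd\bP^\xi_n/\dd\bP_n=\exp(M^n_T-\tfrac12\langle M^n\rangle_T)=:\cE(M^n)_T$ gives, for every Borel $A\subset\chi$,
\[
P^\xi_n(A)=\bE_n\!\left[\cE(M^n)_T\,\ind_{\overline{L}_n\in A}\right].
\]
I would first note that for each fixed $n$ the coefficients $c_i$ are bounded (as $\Gamma$ is bounded and $\sigma\geq\sigma_->0$, whence $|c_i|\leq \norm{\Gamma}_\infty/(\sigma_- p_n)$), so $\langle M^n\rangle_T$ is a.s.\ bounded, $\cE(M^n)$ is a genuine martingale with $\bE_n[\cE(M^n)_T]=1$, and all exponential moments appearing below are finite.

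The heart of the argument is to upgrade \eqref{condLem} into a control of the $L^p$-norms of the density:
\[
\lim_{n\to\infty}\tfrac1n\log\bE_n\!\left[\cE(M^n)_T^{\,p}\right]=0\qquad\text{for every }p\geq1.
\]
For $p=1$ this is the martingale property. For $p>1$ I would use, for a free parameter $\lambda$, the identity $\cE(M^n)_T^{\,p}=\cE(\lambda M^n)_T^{\,p/\lambda}\exp\!\big(\tfrac{p(\lambda-1)}2\langle M^n\rangle_T\big)$ and apply H\"older with exponents $r,r'$; choosing $\lambda=pr$ collapses the first factor to $\bE_n[\cE(prM^n)_T]^{1/r}=1$, leaving $\bE_n[\exp(C\langle M^n\rangle_T)]^{1/r'}$ with $C=\tfrac{r'p(pr-1)}2$, which is $e^{o(n)}$ by \eqref{condLem}. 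The matching lower bound $\geq0$ is Jensen's inequality together with $\bE_n[\cE(M^n)_T]=1$.

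With this in hand the upper bound is immediate: for closed $D\subset\chi$, H\"older gives $P^\xi_n(D)\leq\bE_n[\cE(M^n)_T^{\,p}]^{1/p}\,\overline{P}_n(D)^{1/q}$, hence $\limsup_n\tfrac1n\log P^\xi_n(D)\leq-\tfrac1q\inf_D I$ by the LDP upper bound for $\overline{P}_n$, and letting $q\downarrow1$ recovers $-\inf_D I$.

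The step I expect to be the main obstacle is the lower bound, because H\"older now points the wrong way; here one must invoke the reverse density. I would use $\dd\bP_n/\dd\bP^\xi_n=\cE(-\widetilde M^n)_T$, where $\widetilde M^n$ is the $\bP^\xi_n$-martingale part of $M^n$ and $\langle\widetilde M^n\rangle_T=\langle M^n\rangle_T$. Running the moment-control argument of the previous paragraph under $\bP^\xi_n$ reduces the needed bound $\lim_n\tfrac1n\log\bE^\xi_n[\cE(-\widetilde M^n)_T^{\,p}]=0$ to controlling $\bE^\xi_n[\exp(C\langle M^n\rangle_T)]$, and this follows from the forward hypothesis since
\[
\bE^\xi_n\!\left[e^{C\langle M^n\rangle_T}\right]=\bE_n\!\left[\cE(M^n)_T\,e^{C\langle M^n\rangle_T}\right]\leq\bE_n\!\left[\cE(M^n)_T^{2}\right]^{1/2}\bE_n\!\left[e^{2C\langle M^n\rangle_T}\right]^{1/2},
\]
both factors being $e^{o(n)}$ by the moment control and \eqref{condLem}. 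Finally, for open $O\subset\chi$, writing $\overline{P}_n(O)=\bE^\xi_n[\cE(-\widetilde M^n)_T\,\ind_{L^n\in O}]$ and applying H\"older yields $\tfrac1q\liminf_n\tfrac1n\log P^\xi_n(O)\geq\liminf_n\tfrac1n\log\overline{P}_n(O)\geq-\inf_O I$, and $q\downarrow1$ completes the proof. The only genuinely delicate points are the two-sided change-of-measure bookkeeping (ensuring $\langle M^n\rangle_T$ is the same path functional under both laws and that $\cE$ is a true martingale at fixed $n$) and the reverse moment estimate; everything else is mechanical H\"older and the martingale identity.
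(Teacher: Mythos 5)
Your proof is correct, and its overall strategy coincides with the paper's: transfer the LDP through the Girsanov density $\dd\bP^\xi_n/\dd\bP_n=\exp(M^n_T-\tfrac12\langle M^n\rangle_T)$, and show via H\"older, the unit expectation of exponential martingales, and hypothesis \eqref{condLem} that the density is negligible at scale $e^{o(n)}$. The upper bound is essentially identical to the paper's. Where you diverge is the lower bound. The paper never leaves the measure $\bP_n$: it applies H\"older in the ``reverse'' direction, writing
$\overline{P}_n(A^\circ)=\bE_n\bigl[\ind_{A^\circ}\,e^{Z/p}e^{-Z/p}\bigr]\le P^\xi_n(A^\circ)^{1/p}\,\bE_n\bigl[e^{-qZ/p}\bigr]^{1/q}$
with $Z=M^n_T-\tfrac12\langle M^n\rangle_T$, so that the only quantity to control is a \emph{negative} exponential moment of $Z$ under $\bP_n$, again handled by Cauchy--Schwarz, the supermartingale bound and \eqref{condLem}. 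You instead invert the density, work under $\bP^\xi_n$ with the martingale part $\widetilde M^n$, and then transfer the exponential moment of the bracket back to $\bP_n$ by an extra Cauchy--Schwarz step. Both routes are sound and rest on the same two ingredients; the paper's is slightly leaner (one change of measure, no reverse density, no transfer step), while yours is more modular --- it isolates the single statement ``$\lim_n\tfrac1n\log$ of the $p$-th moment of the density is $0$ in both directions,'' which treats the two LDP bounds symmetrically and would generalize verbatim to other absolutely continuous perturbations. Your preliminary observation that $c_i$ is bounded for fixed $n$, so that the stochastic exponential is a true martingale, is a point the paper leaves implicit and is worth making explicit.
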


\medskip

	Since we want the LDP to hold $\bbP(\dd \xi)$-a.s., we need to show that condition \eqref{condLem} holds in this sense.
To this aim, we redefine the sets $\Omega_n$ given in \eqref{eq:OmeganLLN}, as
\begin{equation}
\label{eq:Omegan}
	\Omega_n \, :=\, \left\{ \xi : \frac 1n \log \bE_{n} \left[\exp\left\{C_n \langle  M^n \rangle_T \right\} \right] > \delta_n  \right\}\, ,
\end{equation}
where $\gd_n$ and $1/C_n$ tend to zero: they have to do so in a slow way and arbitrarily slow will do for us (explicit choices will be given at the end of the proof).


 We need also
 \begin{equation}
 \label{eq:Omegastar}
	\Omega^*\, =\, \left\{\xi:\text{ there exists } n_0  \text{ s.t. } \frac 1n \log \bE_{n} \left[\exp\left\{C_n \langle  M^n \rangle_T \right\} \right] \, \le\,  \delta_n \text{ for every } n\ge n_0 \right\} .
	\end{equation}

\medskip

\begin{lemma}
\label{lem:BC}
	Assuming \eqref{eq:p_n-1} we have  that  $\bbP \left( \Omega^* \right) = 1$.
\end{lemma}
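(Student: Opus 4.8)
The plan is to recognize $\Omega^*$ as the almost-sure \emph{eventual} membership in $\Omega_n^\complement$ and to reduce the statement to a Borel--Cantelli estimate. Reading off \eqref{eq:Omegastar} and \eqref{eq:Omegan}, one has $(\Omega^*)^\complement = \limsup_n \Omega_n$, so it suffices to prove $\sum_{n} \bbP(\Omega_n) < \infty$. Exactly as in the proof of Lemma~\ref{lem:exp}, I would first discard the atypical-degree event: with $A_n$ as in \eqref{eq:An} and $K > K_C$ fixed once and for all, Lemma~\ref{lem:max-degree} gives $\sum_n \bbP(A_n) < \infty$, and everything reduces to bounding $\bbP(\Omega_n \cap A_n^\complement)$.

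For the latter, the first step is to trade the martingale bracket for the quantity already estimated in Lemma~\ref{lem:exp}. Since $\gs_- > 0$, combining \eqref{eq:MT}, \eqref{eq:ci} and \eqref{eq:delta_i} yields the bound $\langle M^n\rangle_T \le \gs_-^{-2}\sum_{i=1}^n\int_0^T \Delta_i(s)\,\dd s =: \gs_-^{-2} D_n$, which holds for every realization of the Brownian motions, and where $D_n = (np_n)^{-2}\sum_{i,j,k}\hat\xi_{i,j}\hat\xi_{i,k} d_{ijk}$ in the notation of \eqref{eq:notations}. Writing $Z_n(\xi) := \bE_n[\exp\{C_n \langle M^n\rangle_T\}]$, Markov's inequality gives $\bbP(\Omega_n\cap A_n^\complement)\le e^{-n\delta_n}\,\bbE[\ind_{A_n^\complement} Z_n]$; since under $\bP_n$ the process is independent of $\xi$, Fubini lets me integrate over the graph first,
\[
\bbE[\ind_{A_n^\complement} Z_n] \le \bE_n\Big[\bbE\big[\ind_{A_n^\complement}\exp\{\tfrac{C_n}{\gs_-^2}D_n\}\big]\Big].
\]
The inner graph-expectation is then estimated exactly as in Lemma~\ref{lem:exp}, but carrying the diverging prefactor $C_n$: using independence over $i$, the inequality $e^x\le 1+|x|e^{|x|}$, Cauchy--Schwarz, the second-moment bound $\bbE[((np_n)^{-2}\sum_{j,k}\hat\xi_{i,j}\hat\xi_{i,k}d_{ijk})^2]\le 4 d_\star^2/(np_n)^2$, and the deterministic bound $|(np_n)^{-2}\sum_{j,k}\hat\xi_{i,j}\hat\xi_{i,k}d_{ijk}|\le K^2 d_\star$ valid on $A_{n,i}^\complement$, one obtains, for every realization of the Brownian motions,
\[
\bbE\big[\ind_{A_n^\complement}\exp\{\tfrac{C_n}{\gs_-^2}D_n\}\big]\le \exp\Big\{\tfrac{2 d_\star}{\gs_-^2}\,\tfrac{C_n}{p_n}\,\exp\big(\tfrac{K^2 d_\star}{\gs_-^2}\,C_n\big)\Big\}.
\]
Since this bound is uniform in the Brownian paths, the outer $\bE_n$ is harmless, and hence $\bbP(\Omega_n\cap A_n^\complement)\le \exp\{-n\delta_n + \tfrac{2d_\star}{\gs_-^2}\,\tfrac{C_n}{p_n}\,e^{\kappa C_n}\}$ with $\kappa := K^2 d_\star/\gs_-^2$.

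It remains to choose the slowly varying sequences, and this is exactly where the main obstacle lies: unlike Lemma~\ref{lem:exp}, where the prefactor was a fixed constant and only $D_n = o(n)$ was needed, here $C_n\to\infty$, so the amplifying factor $e^{\kappa C_n}$ must be kept strictly below the logarithmic budget furnished by \eqref{eq:p_n-1}. Concretely, \eqref{eq:p_n-1} gives $1/p_n = O(n/\log n)$, so the exponent equals $-n\delta_n + O\big(n\,C_n\,e^{\kappa C_n}/\log n\big)$; taking for instance $C_n \asymp \log\log n$ makes $e^{\kappa C_n}$ a small power of $\log n$, after which a sufficiently slowly decaying $\delta_n$ (say $\delta_n\asymp (\log n)^{-1/4}$, so that $n\delta_n\to\infty$) forces $n\delta_n$ to dominate and renders $\sum_n \bbP(\Omega_n\cap A_n^\complement)$ summable. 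This is the freedom ``$\delta_n$ and $1/C_n$ tend to zero arbitrarily slowly'' announced after \eqref{eq:Omegan}, and the choice $C_n\to\infty$ is precisely what will later let the single a.s.\ event $\Omega^*$ cover every fixed constant $C$ in condition \eqref{condLem}, because $\langle M^n\rangle_T\ge 0$ implies $\exp\{C\langle M^n\rangle_T\}\le \exp\{C_n\langle M^n\rangle_T\}$ eventually. Together with $\sum_n\bbP(A_n)<\infty$, Borel--Cantelli then gives $\bbP((\Omega^*)^\complement)=0$, that is $\bbP(\Omega^*)=1$.
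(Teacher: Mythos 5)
Your proof is correct and follows essentially the same route as the paper: Borel--Cantelli after splitting off the bad-degree event $A_n$, Markov's inequality, and the Lemma~\ref{lem:exp} exponential-moment machinery with the coefficients rescaled by the diverging factor $C_n$ (you factor out $\gs_-^{-2}$ and reuse $d_{ijk}$ where the paper absorbs $1/\sigma^2$ into coefficients $c_{ijk}$, which is immaterial). The only point to tighten is the final choice of sequences: with $C_n\asymp\log\log n$ the factor $e^{\kappa C_n}=(\log n)^{\kappa\cdot\text{const}}$ is a \emph{small} power of $\log n$ only if the implied constant is taken smaller than $1/\kappa$, a dependence the paper sidesteps by choosing $C_n=\sqrt{\log(np_n)}$, for which $e^{cC_n}=o((np_n)^{\epsilon})$ for every $c$ and $\epsilon$.
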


\medskip

One readily sees that
Lemma~\ref{lem:BC} provides the missing ingredient and the proof of Theorem~\ref{thm: LDP} is complete.
\qed

\medskip

\noindent
\emph{Proof of Lemma~\ref{lem:1}.}
	Recall \eqref{eq:dRG2}-\eqref{eq:MT}.
	We have  to
	show that \eqref{eq:LDP} holds. Consider $A$ a measurable set and recall that $A^\circ$ is the interior of $A$ and $ \bar{A}$ is its closure.

	Let $p,q>1$ such that $ \tfrac{1}{p}+\tfrac{1}{q}=1$. Then
	\begin{equation}
	P^{\xi}_{n}\left(A^\circ\right) =\bP^{\xi}_{n}\left(\left\{\mu^n_t\right\}_{t\in [0,T]}\in A^\circ\right) =\bE_{n}\left[1_{\left\{ \left\{\mu^n_t\right\}_{t\in [0,T]} \in A^\circ\right\} }\exp\left\{ M_{T}^{n}-\tfrac{1}{2}\left\langle M^{n}\right\rangle _{T}\right\} \right]
	\end{equation}
	and H\"older inequality gives
	\begin{equation}
	P^{\xi}_{n}\left(A^\circ\right) \ge \left(  \overline{P}_{n}\left(A^\circ\right) \right)^p \left(\bE_n \left[ \exp \left\{-\frac qp M^n_T -\frac 12 \frac qp \langle M^n \rangle_T \right\} \right] \right)^{- \frac pq}.
	\end{equation}
	Now observe that Cauchy-Schwarz inequality together with the fact that an exponential martingale has expectation less or equal to $1$ (see \cite[Theorem 5.2]{cf:IW}) imply
	\begin{equation}
	\label{exp1}
	\bE_n \left[ \exp \left\{-\frac qp M^n_T -\frac 12 \frac qp \langle M^n \rangle_T \right\} \right] \le \bE_n \left[ \exp \left\{ \left( \tfrac{2q^2}{p^2} + \frac qp \right) \langle M^n \rangle_T \right\} \right] ^{\frac 12}.
	\end{equation}
	Hence
	\begin{equation}
	P^{\xi}_{n}\left(A^\circ\right) \ge \left(  \overline{P}_{n}\left(A^\circ\right) \right)^p \left(\bE_n \left[ \exp \left\{  \left( \tfrac{2q^2}{p^2} - \frac qp \right) \langle M^n \rangle_T \right\} \right] \right)^{- \frac p{2q}}.
	\end{equation}

	In particular, one obtains
	\begin{align}
	\liminf_{n\rightarrow\infty}\tfrac{1}{n}\log P^{\xi}_{n}\left(A^\circ\right)\ge
	- p \inf_{x\in A^\circ}I(x)- \tfrac{p}{2q}	\liminf_{n\rightarrow \infty} \frac 1n \log \bE_{n} \left[\exp\left\{C \langle  M^n \rangle_T \right\} \right],
	\end{align}
	with $C= \left( \tfrac{2q^2}{p^2} - \frac qp \right) $. By hypothesis the second term on the right is zero and since
	\begin{equation}
	\liminf_{n\rightarrow\infty}\tfrac{1}{n}\log P^{\xi}_{n}\left(A^\circ\right)\ge -p \inf_{x\in A^\circ}I(x),
	\end{equation}
	is true for all $p> 1$, the lower bound in \eqref{eq:LDP} is established.

	The upper bound is almost the same: let $p,q>1$  be such that $ \tfrac{1}{p}+\tfrac{1}{q}=1 $. Similarly
	\begin{equation}
	P^{\xi}_{n}\left(\bar{A}\right) \le \left(  \overline{P}_{n}\left(\bar{A}\right) \right)^{\frac 1p} \left(\bE_n \left[ \exp \left\{q M^n_T -\frac 12 q \langle M^n \rangle_T \right\} \right] \right)^{\frac 1q},
	\end{equation}
	and, using the properties of exponential martingales as in (\ref{exp1}), one gets
	\begin{equation}
	P^{\xi}_{n}\left(\bar{A}\right) \le \left(  \overline{P}_{n}\left(\bar{A}\right) \right)^{\frac 1p} \left(\bE_n \left[ \exp \left\{(2q^2 - q) \langle M^n \rangle_T \right\} \right] \right)^{\frac 1{2q}}.
	\end{equation}
	Finally, the desired inequality reads
	\begin{equation}
	\limsup_{n\rightarrow\infty}\tfrac{1}{n}\log P^{\xi}_{n}\left(\bar{A}\right)\le -\frac 1p \inf_{x\in \bar{A}}I(x) + \frac 1{2p}	\liminf_{n\rightarrow \infty} \frac 1n \log \bE_{n} \left[\exp\left\{C \langle  M^n \rangle_T \right\} \right],
	\end{equation}
	with $C=2q^2 - q$. And we conclude as before.
	\qed

\bigskip

\noindent
\emph{Proof of Lemma~\ref{lem:BC}.}
	We want to show that
	\begin{equation}
	\sum_{n \in \bbN} \bbP \left( \Omega_n \right) < \infty.
	\end{equation}
	As in the proof of Lemma \ref{lem:exp}, let $K>2$ and consider the events $A_n$ defined in \eqref{eq:An},
	\begin{equation*}
	A_n = \bigcup_{i=1}^{n} A_{n,i} \ \
	\text{ with } \ \
	A_{n,i} = \left\{ \xi^{(n)} : \sum_{j} \left|\frac{\xi^{(n)}_{i,j}}{p_n} - 1\right| > Kn.
	\right\}.
	\end{equation*}
	Following the proof of Lemma \ref{lem:max-degree}, we use $\bbP \left( \Omega_n \right) \le \bbP \left( \Omega_n \cap A_n^\complement \right) + \bbP \left( A_n \right)$ and \eqref{eq:An-con}, i.e. $\sum_{n\in\N} \bbP(A_n) < \infty$, so that one is left with proving that $\sum_{n \in \bbN} \bbP \left( \Omega_n \cap A_n^\complement \right) < \infty$.

	By Markov's inequality applied to $\bbP\left(\, \cdot \, \vert  A_n^\complement \right)$ we see that
	\begin{equation}
	\bbP(\Omega_n \cap A_n^\complement)
	\le \exp\left(-n \delta_n + \log \bE \bbE[\ind_{A_n^\complement}\exp(C_n \langle{M^n}\rangle_T)]\right).
	\end{equation}
	so it suffices to show that
	\begin{equation}
	\label{goal}
	 \log \bE \bbE[\ind_{A_n^\complement}\exp(C_n \langle{M^n}\rangle_T)] \, =\, o(n \gd_n)\,.
	\end{equation}

	 To lighten the notation we go back to using the centered random variables
	 $\hat\xi_{i,j} := \xi^{\left( n \right)}_{i,j} - p_n$ (cf. \eqref{eq:notations}). With these notations,  $\langle M^n \rangle_{T}$ can be rewritten as
	\begin{equation}
	\langle M^n \rangle_{T} \, =\, \frac{1}{( p_n n)^{2}}\sum_{i,j,k=1}^{n}\hat\xi_{i,j}\hat \xi_{i,k} c_{ijk},
	\end{equation}
	where
	\begin{equation}
	c_{ijk} \, =\, \int_0^T \frac{1}{ \sigma^{2} \left( \theta^{i,n}_t \right)} \Gamma\left(\theta_{t}^{i,n},\theta_{t}^{j,n}\right) \Gamma\left(\theta_{t}^{i,n},\theta_{t}^{k,n}\right) \dd t.
	\end{equation}
	Observe that $| c_{ijk}| \le c_\star$ given the boundness of $\Gamma$ and the conditions on $\sigma$.

	The estimation of \eqref{goal} is exactly the same as in \eqref{goalLLN}, where $d_{ijk}$ are replaced by $C_n c_{ijk}$ (and $d_\star$ by $C_n c_\star$). Following the same strategy, we get
	\begin{equation}
	\prod_i \bbE\left[	\ind_{A_{n,i}^\complement}	\exp\left(	\frac{C_n}{(np_n)^2} \sum_{j,k} \hat{\xi}_{i,j} 	\hat{\xi}_{i,k} c_{ijk} 	\right)	\right] \le
	\left(1 + \frac{2 C_n}{np_n}\exp{\left\{2 C_n  K^2 c_\star\right\}}\right)^n.
	\end{equation}
	Therefore
	\begin{equation}
	\log \bE \bbE \left[ \ind_{A_n^\complement} \exp \left( C_n \langle M^n \rangle_T \right)\right] \le \frac{2 C_n}{p_n}\exp{\left\{2 C_n  K^2 c_\star\right\}}.
	\end{equation}
	Which gives \eqref{goal} when $C_n=o(\log(np_n))$ and $\frac 1{\delta_n} = o\left(\frac{np_n}{\exp\{c \, C_n\}}\right)$ with $c>2 K^2 c_\star$:  choose, for example, $C_n =\sqrt{\log(np_n)}$ and $\delta_n = \tfrac{1}{\sqrt{np_n}}$.
	\qed

\section*{Acknowledgments}
G.G. thanks Amir Dembo for an insightful discussion. F.C. acknowledges the support from the European Union’s Horizon 2020 research and innovation programme under the Marie Sk\l odowska-Curie grant agreement No 665850.
 H.D. acknowledges support
of  Sorbonne Paris Cit\'e, in the framework of the ``Investissements d'Avenir", convention ANR-11-IDEX-0005, 	and of
the People Programme (Marie Curie Actions) of the European Unions Seventh Framework Programme (FP7/2007-2013) under REA grant agreement n. PCOFUND-GA-2013-609102, through the PRESTIGE programme coordinated by Campus France. G.G.  acknowledges the support of grant ANR-15-CE40-0020.

\end{document}